\def\NAT@def@citea{\def\@citea{\NAT@separator}}
\theoremstyle{plain}
\newtheorem{theorem}{Theorem}[section]
\newtheorem{corollary}[theorem]{Corollary}
\newtheorem{proposition}[theorem]{Proposition}
\theoremstyle{definition}
\newtheorem{definition}[theorem]{Definition}
\newtheorem{example}[theorem]{Example}
\theoremstyle{remark}
\newtheorem{remark}{Remark}
\theoremstyle{definition}
\theoremstyle{definition}
\theoremstyle{definition}
\theoremstyle{definition}
\theoremstyle{definition}
\def\cleardoublepage{\clearpage\if@twoside \ifodd\c@page\else
  \hbox{}
  \vspace*{\fill}
    \vspace{\fill}
  \if@twocolumn\hbox{}\newpage\fi\fi\fi}
\title{} \author{} \date{}
\newtheorem{thm}{Theorem}[section]
\newtheorem{cor}{Corollary}[section]
\newtheorem{lem}{Lemma}[section]
\begin{document}

\vspace{0.8cm}
 
\begin{center}
{\large \bf Metric ideals and its Structures in AL-monoids} 
\vspace*{3mm}

{\bf Tekalign Regasa\footnote {Department of Mathematics, College of Natural and Computational Science College, Addis Ababa University ,Addis Ababa,Ethiopia, email:tekalign.regasa@aau.edu.et},Girum Aklilu \footnote{Department of Mathematics,college of Natural and computational Science College, Addis Ababa University,Addis Ababa,Ethipia e-mail: aklilugirum@gmail.com},  Kolluru Venkateswarlu \footnote{Department of Computer Science and System Engineering, College of Engineering, Andhra University, Visakhapatanam, AP, India,e-mail.drkvenateswarlu@gmail.com}}
\end{center}
\begin{center}
\end{center}
\begin{abstract}
    This paper introduces the concept of metric ideals in AL-monoids. We also examine the structure of AL-monoids and describe some of the properties of homomorphism and fundamentalisomorphism theorems.Additionaly we introduce and examine a direct product and subdirect products in AL-monoids.
\end{abstract}	
\keywords{Strong ideal;AL-monoids;Ideals;Isomorphism in Autometrized algebra; subdirect products}.
\section{Introduction}
Subba Rao and Yedpalli(2018)\cite{Rao2018} and Rao et al.(2019,2021)\cite{Rao2019,Rao2021} studied the concept of Representablity of Autometrized algebra. In 2023, \cite{y} Gebre Yeshiwas et al. studied the structure of Autometrized algebra by introducing the concept of subalgebra and examining diffrent types of ideals in Autometrized algebra. In 2024, Tekalign, Girum and Kolluru venkateswarlu \cite{T} introduces AL-monoids which is equationally definable ,Representable and generalization of DRl-semigroups as the optimal common abstraction of Boolean algebra and commutative lattice ordered groups. In this Paper, We continue in studying the metric ideals in AL-monoids, homomorphism and isomorphism Theorems in AL-monoids and some of its subirreducible product.
\section{Preliminaries}
\begin{definition}\cite{SKLN}
	An Autometrized algebra A is a system $(A,+,\leq,\ast )$ where 
	\begin{enumerate} \item $(A,+)$ is a binary commutative algebra with element 0,
		\item $\leq$ is antisymmetric, reflexive ordering on A,
		\item $\ast : A\times A\rightarrow A$ is a mapping satisfying the formal properties of distance, namely,
		\begin{enumerate}
			\item $a\ast b\geq 0$ for all $a,b$ in A,equality,if and only if $a=b$,
			\item$a\ast b=b\ast a$ for all $a,b$ in A, and
			\item $a\ast b\leq a\ast c +c\ast b$ for all $a,b,c$ in A.
		\end{enumerate}
	\end{enumerate}
\end{definition}
\begin{definition}
	\cite{S2}| A system $A=(A,+,\leq,\ast )$ of arity $(2,2,2)$ is called a Lattice ordered autometrized algebra, if and only if, A satisfies the following conditions.
	\begin{enumerate}
		\item $(A,+,\leq)$ is a commutative lattice ordered semi-group with $'0'$, and
		\item $\ast$ is a metric operation on A.i.e, $\ast$ is a mapping from $A\times A$ into A satisfying the formal properties of distance, namely,
		\begin{enumerate}
			\item $a\ast b\geq 0$ for all $a,b$ in A,equality,if and only if $a=b$,
			\item$a\ast b=b\ast a$ for all $a,b$ in A, and
			\item $a\ast b\leq a\ast c +c\ast b$ for all $a,b,c$ in A.
		\end{enumerate}
	\end{enumerate}
\end{definition}
\begin{definition}
	\cite{Rao2019} A lattice ordered autometrized algebra $A=(A,+,\leq,\ast )$of arity $(2,2,2)$ is called representable autometrized algebra, if and only if, A satisfies the following conditions:
	\begin{enumerate}
		\item $A=(A,+,\leq,\ast)$ is semiregular autometrized algebra. Which means $a\in A$ and $a\geq 0$ implies $a\ast 0=a,$ and 
		\item  for every a in A, all the mappings $x\mapsto a+x,x\mapsto a\vee x,x\mapsto a\wedge x$ and $x\mapsto a\ast x$ are contractions (i.e.,if $\theta$ denotes any one of the operations $+,\wedge,\vee$and $\ast$,then,for each a in A,$(a\theta x)\ast (a\theta y)\leq x\ast y$ for all $x,y$ in A)
	\end{enumerate}
\end{definition}
\begin{definition}\cite{T}$\label{2.1}$ An Autometrized lattice ordered monoid (AL-monoids, for short) is an algebra $(A,+,\vee,\wedge,\ast,0)$ of arity $(2,2,2,2,0)$ where 
	\begin{enumerate}
		\item $(A,+,\vee,\wedge,0)$ is a commutative lattice ordered monoid.
		\item  $a\ast(a\wedge b)+ b  = a\vee b.$
		\item The mappings $x\mapsto a+x,a\vee x, a\wedge x, a\ast x $ are contractions with respect to $\ast$ (A mapping $f:A \longrightarrow A $ is called  a contraction with respect $\ast \Leftrightarrow f(x)\ast f(y) \leq x\ast y $ where $\leq $ is ordering in $A$ induced by $(A,\vee,\wedge)).$
		\item $[a\ast (a\vee b)]\wedge [b\ast (a\vee b)] = 0.$
	\end{enumerate}
\end{definition}
\section{Results}
Let $A=(A,+,\leq,\ast,0)$ be an AL-monoid.By a conguerence relation on A, we mean an equivalence relation, having the substituition property with respect to all operations: $+,\vee, \wedge, \ast .$ Now we introduce the following:
\begin{definition}
    A non empty set $I$ of an AL-monoid $A=(A,+,\leq, \ast,0)$ is called an$ '$ideal' of $A$ if and only if $I$ saitsfies
    \begin{itemize}
        \item[i.] $a\in I$ and $b\in I$ imply $a+b\in I.$
        \item[ii.] $a\in I,b\in A,$ and $b\leq a$ imply $b\in I.$
    \end{itemize}
\end{definition}
If in particular, $A$ is an l-group, then, $S$ will be a lattice ideal. From the definition, we immidiately have the following: 
\begin{thm}
   Any ideal $I$ of an AL-monoid is a convex sub-AL-monoid in the sense that 
   \begin{enumerate}
       \item It is closed under all operations
       \item $a,b \in I$ and $a\wedge b\leq x\leq a\vee b \implies x\in I.$
   \end{enumerate}
\end{thm}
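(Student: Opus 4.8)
The plan is to treat the two assertions separately, and to notice first that the convexity condition in part~(2) is essentially a corollary of closure under $\vee$. Indeed, suppose we have already shown that $a,b\in I$ implies $a\vee b\in I$. Then for any $x$ with $a\wedge b\le x\le a\vee b$ we have in particular $x\le a\vee b\in I$, and ideal axiom~(ii) immediately yields $x\in I$. So the entire theorem collapses to part~(1), the verification that $I$ is closed under each of $+,\wedge,\vee,\ast$.

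Two of these closures are immediate. Closure under $+$ is precisely ideal axiom~(i). For $\wedge$, observe that $a\wedge b\le a$; since $a\in I$, axiom~(ii) gives $a\wedge b\in I$. The remaining two, closure under $\vee$ and under $\ast$, are where the AL-monoid structure must be used, and I would reduce both of them to a single key fact about distances from $0$.

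For $\vee$ I would invoke the defining AL-monoid identity $a\ast(a\wedge b)+b=a\vee b$ together with the contraction property of the map $x\mapsto a\wedge x$. Applying that contraction to the pair $(a,b)$ gives $a\ast(a\wedge b)=(a\wedge a)\ast(a\wedge b)\le a\ast b$, whence $a\vee b\le (a\ast b)+b$ by order-preservation of $+$. Thus, once $\ast$-closure is known, $a\ast b\in I$ and $b\in I$ force $(a\ast b)+b\in I$, and axiom~(ii) delivers $a\vee b\in I$. In this way both the convexity claim and $\vee$-closure rest entirely on closure under $\ast$.

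For $\ast$, the triangle inequality routed through $0$ gives $a\ast b\le a\ast 0+0\ast b=a\ast 0+b\ast 0$ (using commutativity of $\ast$), so it suffices to prove the lemma that $a\in I$ implies $a\ast 0\in I$; closure under $+$ and axiom~(ii) then finish. To attack this lemma I would route the triangle inequality through $a\wedge 0$ and use the identity above with $b=0$, namely $a\ast(a\wedge 0)=a\vee 0$, to obtain $a\ast 0\le (a\vee 0)+(a\wedge 0)\ast 0$. The main obstacle is exactly here: one must show this upper bound already lies in $I$, i.e. that the positive part $a\vee 0$ and the distance $(a\wedge 0)\ast 0$ of the negative part to $0$ are controlled by $I$. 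I expect this to require the full contraction hypotheses of the AL-monoid definition together with the representable/semiregular behaviour recorded in \cite{T}, since it is precisely the step at which a merely additively-closed, downward-closed set need not be well behaved. Establishing $a\ast 0\in I$ is therefore the crux on which the whole theorem turns, and once it is in hand the closures under $\ast$, then $\vee$, and finally the convexity statement follow in the order described above.
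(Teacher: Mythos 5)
Your reductions are sound and in fact better organized than the paper's own argument: convexity from $\vee$-closure plus downward closure, $\wedge$-closure from $a\wedge b\leq a$, $\vee$-closure from the identity $a\ast(a\wedge b)+b=a\vee b$ together with the contraction $a\ast(a\wedge b)\leq a\ast b$, and $\ast$-closure from the triangle inequality $a\ast b\leq a\ast 0+0\ast b$. But the argument is not complete: everything is made to rest on the lemma that $a\in I$ implies $a\ast 0\in I$, and you explicitly stop short of proving it, saying only that you ``expect'' it to follow from the contraction and semiregularity hypotheses. That is the load-bearing step, so as written the proof has a genuine gap. Your attempted route through $a\wedge 0$, giving $a\ast 0\leq(a\vee 0)+(a\wedge 0)\ast 0$, does not close it either, since you have no control over $(a\wedge 0)\ast 0$ from the two ideal axioms alone.

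The paper closes exactly this gap at the outset by invoking the semiregular behaviour of AL-monoids on ideal elements: it takes $a\ast 0=a$ for $a\in I$ (equivalently, it works with the identity $x\ast 0=x\vee(0\ast x)$), so that $a\ast b\leq a\ast 0+0\ast b=a+b\in I$ gives $\ast$-closure immediately, and for convexity it bounds $x\ast 0=x\vee(0\ast x)\leq(a\ast 0)\vee(b\ast 0)\leq a\ast 0+0\ast b\in I$ and uses $x\leq x\ast 0$ to conclude $x\in I$ by downward closure. So the missing ingredient is precisely the identity $a\ast 0=a$ (or $x\ast 0=x\vee(0\ast x)$) recorded for AL-monoids in \cite{T}; once you add that one line, your chain of reductions goes through and yields a cleaner proof than the one in the paper, which never separately addresses closure under $+$, $\vee$ and $\wedge$ at all.
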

\begin{proof}
    Let $a,b \in I$ then, $a\ast 0=a$ and $a\in I.$ Similarly $b\in I.$ $a\ast b\ast 0=a\ast b\leq (0\ast a +0\ast b)\leq a\ast b,$ follows that $a\ast b\in I.$\\
    $(a\ast b)\ast 0=(a\ast b)\vee (0\ast (a\ast b))\leq (a\ast b)\vee (b\ast a)=a\ast b$ and hence $a\ast b\in I.$ Now,let $a\wedge b \leq x \leq a\vee b.$ Then, 
\begin{eqnarray*}
x\ast 0=x\vee (0\ast x)\leq a\vee b\vee (0\ast (a\wedge b))&=& a\vee b\vee (0\ast a)\vee (0\ast b)\\&=&a\vee (0\ast a)\vee b\vee (0\ast b)\\ &=& (a\ast 0)\vee (b\ast 0)\leq a\ast 0+0\ast b\\ &=&(a\ast 0 +0\ast b)\ast 0.
\end{eqnarray*} 
and since $a\ast 0+0\ast b \in I,$ it follows that $x\in I.$
\end{proof}
\begin{definition}
    An equivalence $\theta$ on an AL-monoid $A=(A,+,\leq, \ast, 0)$ is called $''$ a conguerence relation" if and only if $\theta$ satisfies
    \begin{itemize}
        \item[$c_1$] $a\equiv b(\theta),c\equiv d(\theta)$ immply $a+c\equiv b+d(\theta)$ 
        \item [$c_2$] $a\equiv b(\theta),c\equiv d(\theta)$ imply $a\ast c\equiv b\ast d(\theta)$ for all $a,b,c,d$ in $A,$ and 
        \item [$c_[3$] $a\equiv b(\theta)$ and $ x\ast y\leq a\ast b$ imply that $x\equiv y(\theta)$ for $a,b,x,y$ in $A$.
    \end{itemize}
\end{definition}
\begin{remark}
    Let $\theta$ be an equivalence relation on AL-monoid $A=(A,+,\leq,\ast,0)$. Then, $(C_1)$ and $c_3$ imply $c_2,c_4,$ and $c_5,$ where $c_4$ and $c_5$ are given below:
    \begin{itemize}
 \item[$c_4$] $a\equiv b(\theta),c\equiv d(\theta)$ immply $a\vee c\equiv b\vee d(\theta)$ 
        \item [$c_5$] $a\equiv b(\theta), c\equiv d(\theta)$ imply $a\wedge c\equiv b\wedge d(\theta)$ for all $a,b,c,d$ in $A,$
    \end{itemize}
\end{remark}
\begin{remark}
    In order to show an equivalence relation $\theta$ on $A=(A,+,\leq,0)$ is a conguerence relation, it is enough to show that $\theta$ satisfies $c_1$ and $c_3.$
\end{remark}
\begin{thm}
    The ideals of any AL-monoid is correspond one to one to its conguerence relations.
\begin{proof}
Let $I$ be an ideal of $A$. Define $a\equiv b(I)$ iff $a\ast b \in I.$ Since $0\in I \implies a\equiv b(I), a\ast b=b\ast a \implies b\equiv a(I).$ And, $a\equiv b(I), b\equiv c(I) \implies a\ast b \And b\ast c \in I$ and $a\ast c\leq a\ast b+b\ast c$ follows that $a\equiv c(I).$ Hence, $a\equiv b(I)$ is an equivalence relation.\\ Let $a\equiv b(I)$ and $c\equiv d(I).$ Now,since $\ast$ is isometry then,$a\ast b +c\ast d=(a +c)\ast (b\ast d).$Hence, $a+c\equiv b+d(I).$ From those we observe that $a\ast b, c\ast d \in I,$ and since $I$ is an ideal $(d\ast c)+(a\ast b)$ and $(b\ast a)+(c\ast d)$ belongs to $I.$ Now, $c+(b\ast d)+(d\ast c) +(a\ast b)\geq c+(b\ast c)+(a\ast b)\geq c+(a\ast c)=a.$ Hence follows that $a\ast c)\ast (b\ast d)\leq (d\ast c)+(a\ast b);$ and since, $a+(a\ast c)+(c\ast d)+(b\ast a)\geq d+(a\ast d)+(b\ast d)\geq d+(b\ast d)\geq b$ follows that $(b\ast d)\ast (a\ast c)\leq (c\ast d)+(b\ast a).$ Hence, $(a\ast c)\ast (b\ast d)\leq ((d\ast c)+(a\ast b))\vee ((d\ast c)+(a\ast b))\vee ((c\ast d)+(b\ast d)).$ So that, $(a\ast c)\ast (b\ast d)\in I.$ Hence, $(a\vee c)\ast (b\ast d)=(a\ast b)\vee (c\ast d)$, since $\ast$ is isometry $(a\ast b)\vee (c\ast d)\in I,$ follows that $(a\vee c)\ast (b\vee d)\in I.$ Hence, $a\vee c\equiv b\vee d(I).$\\
Finally, $a\wedge c)\ast (b\wedge d)=(a\ast b)\wedge (c\ast d)\implies a\wedge c\equiv b\wedge d (I).$ Hence, $I$ defines a conguerence relation on $A.$\\
   Conversely, Let $Q$ be a conguerence relation on $A$ and let $N$ be the set of all $x\equiv 0(Q).$ Obviously,$a,b \in N \implies a+b\in N.$ Let $x\in N$ and $y\leq x.$ Then, $x\equiv 0(Q) \implies x\ast 0\equiv 0(Q)\implies y\vee x\equiv 0(Q)\implies y\equiv 0(Q)\implies y\in N$. hence $N$ is an Ideal.\\
   Now, let $Q'$ be a conguerence relation obtained by defining $a\equiv b(Q')$ iff $a\ast b\in N.$ Then, $a\equiv b(Q')$ iff $a\ast b \in N$ iff $a\ast b\equiv 0(Q)$ iff $a\vee b\equiv a\wedge b(Q)$ iff $a\equiv b(Q).$ Hence, there is one-to-one correspondence between conguerence relations on $A$ and its ideals.
   \end{proof}
\end{thm}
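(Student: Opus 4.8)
The plan is to write down the two maps between ideals and congruences and then show they invert one another. To an ideal $I$ I associate the relation $\theta_I$ given by $a\equiv b\,(\theta_I)\iff a\ast b\in I$, and to a congruence $\theta$ I associate its $0$-class $N_\theta=\{x\in A: x\equiv 0\,(\theta)\}$. I would then establish three things: $\theta_I$ is a congruence, $N_\theta$ is an ideal, and the assignments $I\mapsto\theta_I$, $\theta\mapsto N_\theta$ are mutually inverse, which together give the claimed one-to-one correspondence.

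First I would check that $\theta_I$ is an equivalence relation: reflexivity holds because $p\ast p=0$ and, since $I$ is closed under $\ast$ (the theorem proved above), $0\in I$; symmetry is $a\ast b=b\ast a$; transitivity follows from $a\ast c\le a\ast b+b\ast c$ together with closure of $I$ under $+$ and downward closure. By the Remark it then suffices to verify $c_1$ and $c_3$. Property $c_3$ is immediate, for $x\ast y\le a\ast b\in I$ forces $x\ast y\in I$ by downward closure. For $c_1$, assuming $a\ast b,c\ast d\in I$, I bound $(a+c)\ast(b+d)\le(a+c)\ast(b+c)+(b+c)\ast(b+d)\le(a\ast b)+(c\ast d)$, where the two estimates use that the translations $x\mapsto x+c$ and $x\mapsto b+x$ are contractions with respect to $\ast$; closure of $I$ under $+$ and downward closure then give $(a+c)\ast(b+d)\in I$, i.e. $a+c\equiv b+d$.

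For $N_\theta$: closure under $+$ is just $c_1$ applied to $a\equiv 0$, $b\equiv 0$. For the order-ideal condition, take $0\le y\le x$ with $x\equiv 0$; by semiregularity $x\ast 0=x$ and $y\ast 0=y$, so $y\ast 0=y\le x=x\ast 0$, and $c_3$ (applied to the congruent pair $x\equiv 0$) yields $y\equiv 0$. I expect this to be the most delicate point, because $u\mapsto u\ast 0$ behaves like an absolute value and is not order-preserving away from the positive cone; the argument therefore relies on reading the ideal inside $A^{+}$, exactly as the proof of the first theorem already does when it writes $a\ast 0=a$.

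Finally I would close the loop. For $\theta\mapsto N_\theta\mapsto\theta_{N_\theta}$, unwinding the definitions gives $a\equiv b\,(\theta_{N_\theta})\iff a\ast b\in N_\theta\iff a\ast b\equiv 0\,(\theta)$, so everything reduces to the bridge equivalence $a\equiv b\,(\theta)\iff a\ast b\equiv 0\,(\theta)$: the forward direction is $c_2$ applied to $a\equiv b$ (giving $a\ast b\equiv b\ast b=0$), while for the converse, since $(a\ast b)\ast 0=a\ast b$ by semiregularity, the congruent pair $a\ast b\equiv 0$ together with $a\ast b\le(a\ast b)\ast 0$ lets $c_3$ conclude $a\equiv b$. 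For $I\mapsto\theta_I\mapsto N_{\theta_I}$, I must show $\{x:x\ast 0\in I\}=I$: one inclusion uses $x\in I$, $0\in I$ and closure under $\ast$, and the other uses $x\le x\ast 0$ with downward closure. I regard this bridge identity, which is precisely what couples $\ast$ to the order, as the conceptual heart of the theorem, and the order-sensitivity of $u\mapsto u\ast 0$ as the spot where the most care is needed.
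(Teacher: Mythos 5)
Your proposal is correct and sets up exactly the same correspondence as the paper ($I\mapsto\theta_I$ with $a\equiv b\iff a\ast b\in I$, and $\theta\mapsto N_\theta$ the $0$-class), pivoting on the same bridge identity $a\equiv b\,(\theta)\iff a\ast b\equiv 0\,(\theta)$; the difference is in how the congruence axioms are verified, and there your route is both shorter and sounder. The paper checks compatibility with $+$, $\vee$ and $\wedge$ one operation at a time, and for $+$ it invokes the equality $a\ast b+c\ast d=(a+c)\ast(b+d)$ (``since $\ast$ is isometry''), which is not an axiom of AL-monoids and in general only holds as the inequality $(a+c)\ast(b+d)\le a\ast b+c\ast d$; your derivation of precisely that inequality from the triangle inequality plus the contraction property of translations is the legitimate version of this step. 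You also exploit the paper's own Remark that $c_1$ and $c_3$ suffice, which lets you dispense with the long $\vee$ and $\wedge$ computations and, importantly, supplies the $c_3$ verification that the paper's proof never actually performs even though $c_3$ is part of its definition of congruence. Finally, you check both composites ($N_{\theta_I}=I$ as well as $\theta_{N_\theta}=\theta$), whereas the paper only verifies the second, so strictly speaking only your argument establishes a bijection. The one point where you and the paper are equally exposed is the downward-closure of $N_\theta$ for elements not in the positive cone: you flag it honestly and handle $0\le y\le x$, which is no worse than the paper's own treatment, but if the intended generality includes elements below $0$ (as it must for $l$-groups) both arguments would need the extra step relating $y\ast 0$ to $x\ast 0$.
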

\begin{cor}
The ideals of any DRl-semigroup is correspond one to one  to its conguerence relations 
\end{cor}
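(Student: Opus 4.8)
The plan is to obtain this corollary as an immediate specialization of the preceding theorem, leaning on the fact recorded in the introduction (following \cite{T}) that AL-monoids are a generalization of DRl-semigroups. Concretely, I would first recall that every DRl-semigroup $(A,+,\vee,\wedge,-,0)$ becomes an AL-monoid once it is equipped with the distance $a\ast b=(a-b)\vee(b-a)$, i.e.\ the natural metric induced by the residuated subtraction. With this $\ast$, the system $(A,+,\vee,\wedge,\ast,0)$ satisfies the four axioms of the definition of AL-monoid (Definition~\ref{2.1}); this is exactly the embedding of the class of DRl-semigroups into the class of AL-monoids established in \cite{T}, so I would cite it rather than re-derive the axioms. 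In particular axiom~(2), $a\ast(a\wedge b)+b=a\vee b$, reduces to the standard DRl identity $(a-(a\wedge b))+b=a\vee b$.

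Next I would observe that the notions of ideal and of congruence transfer verbatim. An ideal of the DRl-semigroup is a nonempty subset closed under $+$ and downward closed under $\leq$; since the order $\leq$ and the addition $+$ are unchanged in the associated AL-monoid, the ideals of the two structures coincide set-theoretically. Likewise a congruence of the DRl-semigroup is an equivalence with the substitution property for $+,\vee,\wedge$ and the derived $-$, whereas a congruence of the AL-monoid must in addition respect $\ast$. Because $\ast$ is built from $\vee,\wedge$ and $-$, compatibility with those operations already forces compatibility with $\ast$, so the two congruence lattices agree.

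Having aligned both notions, I would simply invoke the theorem just established: it furnishes a bijection between ideals and congruences for every AL-monoid, and in particular for the AL-monoid arising from our DRl-semigroup. Restricting this bijection along the identifications above yields the desired one-to-one correspondence for the DRl-semigroup, completing the proof.

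The main obstacle I anticipate is the second step rather than the first. One must be careful that condition $c_3$ (compatibility with $\ast$) neither strengthens nor weakens the DRl-semigroup congruence condition. The cleanest way to settle this is to show that $a\ast b\in I$ holds in the AL-monoid exactly when the pair $(a,b)$ lies in the congruence induced by $I$ as a DRl-ideal, which reduces to the identity $a\ast b=(a-b)\vee(b-a)$ together with the convexity of ideals established in the first theorem of this section.
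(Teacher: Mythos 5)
Your proposal is correct and matches the paper's intent exactly: the paper states this corollary without proof, treating it as an immediate specialization of the preceding theorem via the fact (from \cite{T}) that every DRl-semigroup becomes an AL-monoid under $a\ast b=(a-b)\vee(b-a)$. Your additional care in checking that ideals and congruences transfer verbatim (in particular that condition $c_3$ is automatic for DRl-congruences) is a worthwhile elaboration of the same argument, not a different route.
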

\begin{cor}
 The ideals of any Boolean algebra is correspond one to one  to its conguerence relations    
\end{cor}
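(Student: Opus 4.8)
The plan is to derive this corollary as a direct specialization of the preceding theorem, exploiting the fact --- established in \cite{T} --- that every Boolean algebra is an instance of an AL-monoid. I would first recall the AL-monoid structure carried by a Boolean algebra $(B,\vee,\wedge,{}',0,1)$: the monoid addition $+$ is the join $\vee$, the order $\leq$ is the usual Boolean order, the distinguished constant is the least element $0$, and the metric operation is the symmetric difference $a\ast b=(a\wedge b')\vee(a'\wedge b)$. With these choices the AL-monoid axioms hold; for instance a short computation gives $a\ast(a\wedge b)=a\wedge b'$, whence $a\ast(a\wedge b)+b=(a\wedge b')\vee b=a\vee b$, while $a\ast(a\vee b)=a'\wedge b$ and $b\ast(a\vee b)=a\wedge b'$ meet in $0$. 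Since these verifications are carried out in \cite{T}, I would simply cite them rather than redo the computation.

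Next I would confirm that the two notions being matched reduce to their classical Boolean counterparts. With $+=\vee$, the two AL-monoid ideal conditions (closure under $+$ and downward closure under $\leq$) are verbatim the conditions defining a lattice ideal of $B$, so the AL-monoid ideals of $B$ are exactly its Boolean ideals. Dually, the AL-monoid congruences of $B$ coincide with its Boolean-algebra congruences: an AL-monoid congruence respects $\vee$ and $\wedge$, hence is a lattice congruence, and on a Boolean algebra every lattice congruence is automatically a full congruence, while conversely a Boolean congruence respects the derived operations $+=\vee$ and $a\ast b=(a\wedge b')\vee(a'\wedge b)$, so it is an AL-monoid congruence.

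Having set up both identifications, I would invoke the preceding theorem, which furnishes a one-to-one correspondence between the ideals and the congruence relations of an arbitrary AL-monoid. Applying it to the AL-monoid reduct of $B$ and transporting the conclusion through the two identifications yields the desired bijection between the ideals of the Boolean algebra and its congruence relations. The only genuine content lies in the claim that the symmetric-difference structure makes a Boolean algebra into an AL-monoid and that the resulting ideal and congruence notions are the familiar ones; this is precisely what is isolated in \cite{T} and in the classical Boolean theory, so the main obstacle here is organizational bookkeeping rather than any new argument.
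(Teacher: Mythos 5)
Your proposal is correct and follows exactly the route the paper intends: the corollary is stated without proof as an immediate specialization of the preceding theorem, relying on the fact from \cite{T} that a Boolean algebra with $+=\vee$ and $\ast$ the symmetric difference is an AL-monoid whose ideals and congruences are the classical Boolean ones. Your write-up merely makes explicit the identifications the paper leaves implicit.
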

\begin{cor}
The conguerence relations on a commuative l-group are the partitions of $A$ in to the cosets of its diffrent l-ideals
\end{cor}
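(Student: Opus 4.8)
The plan is to specialize the preceding one-to-one correspondence between ideals and congruence relations to the setting of a commutative $l$-group, where the metric operation is $a \ast b = |a-b| = (a-b)\vee(b-a)$ and the distinguished element $0$ is the group identity. First I would confirm that a commutative $l$-group $A$ is indeed an AL-monoid under this distance, so that the preceding theorem applies verbatim: every congruence relation on $A$ arises from a unique ideal $N = \{x : x \equiv 0\}$, and conversely every ideal determines a congruence via $a \equiv b$ iff $a \ast b \in N$. This reduces the corollary to two identification tasks — matching the AL-monoid ideals with the $l$-ideals of $A$, and matching the resulting congruence classes with cosets.

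For the first identification, I would show that a subset $I \subseteq A$ is an ideal in the AL-monoid sense precisely when it is an $l$-ideal (a convex sublattice subgroup) of $A$. Closure under $+$ together with the convexity established in the earlier theorem (that any ideal satisfies $a\wedge b \le x \le a\vee b \Rightarrow x\in I$) gives that $I$ is a convex subsemigroup closed under $\vee$ and $\wedge$; conversely any $l$-ideal satisfies the two defining clauses of an AL-monoid ideal. Here I would use that for $x \ge 0$ one has $x \ast 0 = |x| = x$, so nonnegativity and convexity interact exactly as in the $l$-group theory.

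The key computational step is to show that the congruence $a \equiv b(I)$ defined in the main theorem by $a \ast b \in I$ coincides with the coset relation $a - b \in I$. Writing $a \ast b = |a-b| = (a-b)\vee(b-a)$, I would argue: if $a - b \in I$ then $b - a \in I$ since $I$ is a subgroup, and $|a-b|\in I$ by closure under $\vee$; conversely, from $-|a-b| \le a-b \le |a-b|$ with $\pm|a-b| \in I$, convexity forces $a - b \in I$. Thus $a\equiv b(I)$ iff $a - b \in I$ iff $a + I = b + I$, so the congruence classes are exactly the cosets of the $l$-ideal $I$.

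Finally I would assemble these pieces: the main theorem gives the bijection between congruences and ideals, the first identification replaces ``ideals'' by ``$l$-ideals,'' and the computation replaces each congruence by its partition into cosets, with distinct $l$-ideals yielding distinct partitions. I expect the main obstacle to be the careful verification that the AL-monoid ideal axioms are equivalent to the $l$-ideal axioms in both directions, since this is where the group structure (subtraction, cosets) must be reconciled with the purely order-and-distance formulation used for general AL-monoids.
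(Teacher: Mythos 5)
The paper offers no proof of this corollary at all: it is listed, together with the DRl-semigroup and Boolean-algebra cases, as an immediate consequence of the preceding theorem that ideals of an AL-monoid correspond one-to-one to its congruence relations. Your proposal is therefore the natural (and essentially the intended) argument: verify that a commutative $l$-group with $a\ast b=(a-b)\vee(b-a)$ is an AL-monoid, identify the ideals, and translate the congruence $a\ast b\in I$ into the coset relation $a-b\in I$. Your coset computation is correct and is exactly the right key step: $a-b\in I$ gives $|a-b|\in I$ because an $l$-ideal is a sublattice subgroup, and conversely $-|a-b|\le a-b\le |a-b|$ plus convexity gives $a-b\in I$.

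The one place where your plan, as stated, would not go through literally is the claim that a subset of $A$ is an AL-monoid ideal \emph{precisely when} it is an $l$-ideal. The paper's ideal axiom is that $a\in I$, $b\in A$ and $b\le a$ imply $b\in I$, i.e.\ ideals are order down-sets closed under $+$. In an $l$-group an $l$-ideal (a convex $l$-subgroup) is never a down-set unless it is all of $A$, and a down-set containing $0$ necessarily contains every negative element; so the two families of subsets are not equal, only in bijection. To repair this you must either read the second ideal axiom in the form used throughout the paper's own proofs, namely $b\ast 0\le a\ast 0$ implies $b\in I$ (which in the $l$-group becomes $|b|\le|a|$, and then AL-monoid ideals really are the $l$-ideals), or else exhibit the bijection $I\mapsto\{x\in A: |x|\in I\}$ explicitly and check it is compatible with the induced congruences. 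Since you already flagged this identification as the main obstacle, the fix is minor, but without it the phrase ``precisely when'' is false under the paper's literal definition.
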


\begin{lem}
    \begin{itemize}
        \item[1] The intersection of any non empty collection of ideals of $A$ is an ideal of $A.$
        \item[2] For any non-empty subset of $S$ of $A$, the set $\lbrace x\in A| x\leq (a_1+a_2+...+a_n)$ for $a_1,a_2,..,a_n\in S \rbrace$ is the smallest ideal containing $S$(i.e., the ideal generated by $S.$
        \item[3] The principal ideal $<a>$ generated by $a,$ that is the smallest ideal containing $a$ is $\lbrace x\in A|x\leq ma\rbrace$ for some prositive intiger $m.$
        \item[4] The join of any collection of ideals $\lbrace I_{\alpha}\rbrace_{\alpha\in \delta}$ is the ideal generated by $\bigcup_{\alpha \in \delta}I_{\alpha}$
        \item[5] If $I$ and $J$ are ideals of $A$, then, $I\vee J=\lbrace a\in A|a\ast x+y $ for some $x\in I$ and $y\in J \rbrace$ is the join of $I$ and $J.$
        \item[6] The join of any two principal ideals in $A$ is a principal ideal. In fact, $<a>\vee <b>=<a+b>$ 
        \item[7] An ideal $I$ of $A$ is compact if and only if $I$ is principal ideal.
    \end{itemize}
\end{lem}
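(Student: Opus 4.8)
The backbone of the whole lemma is part (2): once the ideal generated by a set is described explicitly, parts (3)--(6) drop out as special cases or short manipulations, and (7) follows by combining (6) with the completeness supplied by (1) and (4). I would therefore prove (1) and (2) from scratch and bootstrap the rest. For (1), with $\{I_\alpha\}$ ideals and $I=\bigcap_\alpha I_\alpha$, the set is non-empty because $0$ lies in every ideal (any $a\in I_\alpha$ is non-negative, as in the convexity theorem above where $a\ast 0=a$, so $0\le a$ forces $0\in I_\alpha$ by downward closure), and the two ideal axioms are checked membershipwise: $x,y\in I$ gives $x+y\in I_\alpha$ for every $\alpha$, and $z\le x\in I$ gives $z\in I_\alpha$ for every $\alpha$. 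For (2), set $T=\{x: x\le a_1+\cdots+a_n,\ a_i\in S,\ n\ge 1\}$ and verify in turn: $S\subseteq T$ (take $n=1$); $T$ is an ideal, using that translation is order-preserving in the lattice-ordered monoid so $x\le\sum a_i$ and $y\le\sum b_j$ give $x+y\le\sum a_i+\sum b_j$, together with transitivity of $\le$ for downward closure; and minimality, since any ideal containing $S$ must contain every finite sum (closure under $+$) and every element below it (downward closure), hence contains $T$.

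Parts (3)--(5) are then corollaries. Taking $S=\{a\}$ in (2) collapses the finite sums to multiples $ma$, which is (3). Since (1) makes the ideals of $A$ a complete lattice under inclusion, the join of a family is by definition the least ideal above their union, i.e.\ the ideal generated by $\bigcup_\alpha I_\alpha$; this is (4). For (5) I would apply (4) and (2) to $I\cup J$ and group a generating sum $c_1+\cdots+c_n$ into its $I$-part $x$ and its $J$-part $y$: closure under $+$ gives $x\in I$ and $y\in J$ (an empty group contributing $0$, which lies in both), so the generated ideal is exactly $\{a: a\le x+y,\ x\in I,\ y\in J\}$.

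For (6) I would expand both sides via (3) and (5): $\langle a\rangle\vee\langle b\rangle=\{z: z\le ma+nb\}$ and $\langle a+b\rangle=\{z: z\le k(a+b)=ka+kb\}$. The inclusion $\supseteq$ is immediate with $m=n=k$; for $\subseteq$, given $z\le ma+nb$ put $k=\max(m,n)$ and use $ma\le ka$, $nb\le kb$ to reach $z\le k(a+b)$. Finally (7): if $I=\langle a\rangle$ is principal and $\langle a\rangle\subseteq\bigvee_\alpha J_\alpha$, then by (4) and (2) the generator satisfies $a\le c_1+\cdots+c_n$ with each $c_i\in J_{\alpha_i}$, so $a\in J_{\alpha_1}\vee\cdots\vee J_{\alpha_n}$ and $I$ is compact. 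Conversely every ideal equals the join of the principal ideals it contains, $I=\bigvee_{a\in I}\langle a\rangle$; compactness reduces this to a finite join $\langle a_1\rangle\vee\cdots\vee\langle a_m\rangle$, which by the iterated form of (6) is $\langle a_1+\cdots+a_m\rangle$, a principal ideal.

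The step needing genuine care --- the \textbf{main obstacle} --- is the non-negativity of ideal elements that silently underlies several inclusions. The monotonicity $ma\le ka$ for $m\le k$, the direction $\subseteq$ in (6), and the presence of $0$ in every ideal all rely on the elements being $\ge 0$; indeed (6) already fails in the $l$-group $\mathbb{Z}$ if one allows $a=1,\ b=-1$, since then $\langle a\rangle\vee\langle b\rangle$ is everything while $\langle a+b\rangle=\langle 0\rangle$ is proper. I would therefore make explicit, as is already implicit in the convexity theorem (where $a\ast 0=a$), that the members of an ideal are non-negative, so that $m\mapsto ma$ is order-preserving and $0$ belongs to every ideal. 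The only other mild subtlety is justifying, in (7), both $I=\bigvee_{a\in I}\langle a\rangle$ and the fact that a finite join of principal ideals is principal, each of which I obtain from (4) and the iterated (6).
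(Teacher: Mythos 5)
The paper states this lemma without any proof at all, so there is nothing to compare your argument against; judged on its own, your development is correct and is the standard one (it follows the classical treatment of ideals in DRl-semigroups and autometrized algebras due to Swamy): prove (1) and (2) directly, obtain (3)--(5) as specializations, derive (6) by comparing the explicit descriptions, and get (7) from (6) together with the algebraicity of the ideal lattice. Your one substantive observation --- that the monotonicity $ma\le ka$, the inclusion $\subseteq$ in (6), and the membership of $0$ in every ideal all silently require the elements involved to be non-negative --- is a genuine gap in the paper's own setup, not in your proof: with the ideal definition taken literally (downward closure under $\le$) in an AL-monoid that, like an $l$-group, has elements below $0$, item (6) fails exactly as your $\mathbb{Z}$ example shows. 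The paper's proof of its convexity theorem (where it writes $a\ast 0=a$ for $a\in I$) confirms that the authors are implicitly working with non-negative elements, or equivalently that the downward-closure axiom should be read as $b\ast 0\le a\ast 0\Rightarrow b\in I$; you are right to make that hypothesis explicit before using it. The only cosmetic point is that in (5) you silently corrected the paper's garbled condition ``$a\ast x+y$'' to ``$a\le x+y$'', which is the intended reading and consistent with your derivation from (2) and (4).
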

\begin{theorem}
    The set of all ideals of $A$ is a complete algebraic lattice, ordered by a set inclusion.
\end{theorem}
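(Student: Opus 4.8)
The plan is to establish the two defining features in turn: first that the poset $(\mathcal{I}(A),\subseteq)$ of all ideals of $A$ is a complete lattice, and then that this lattice is algebraic, i.e. every ideal is a join of compact ideals. For completeness I would lean on the standard order-theoretic fact that a poset in which every family admits an infimum is automatically a complete lattice, the supremum of a family being realized as the infimum of its set of upper bounds. Part (1) of the Lemma supplies precisely the needed infima: the intersection $\bigcap_{\alpha}I_{\alpha}$ of any non-empty family of ideals is again an ideal, and it is visibly the greatest lower bound under inclusion. The whole algebra $A$ is itself an ideal and serves as the top element (equivalently the empty meet), while intersecting the non-empty family of \emph{all} ideals yields a least ideal, the bottom element. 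Hence arbitrary meets exist and $(\mathcal{I}(A),\subseteq)$ is a complete lattice.

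Next I would pin down the join concretely so as to connect it with the intrinsic supremum. By part (4) of the Lemma, $\bigvee_{\alpha}I_{\alpha}$ is the ideal generated by $\bigcup_{\alpha}I_{\alpha}$, and by part (2) this generated ideal is the \emph{smallest} ideal containing the union; being the least ideal above every $I_{\alpha}$, it therefore coincides with the least upper bound in the poset. For algebraicity, recall that an element of a complete lattice is compact when it cannot be dominated by a join without already being dominated by some finite subjoin, and that the lattice is algebraic exactly when every element is a join of compact elements. Part (7) of the Lemma identifies the compact ideals as precisely the principal ideals $\langle a\rangle$, so it remains to show that each ideal satisfies $I=\bigvee_{a\in I}\langle a\rangle$. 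One inclusion is immediate, since for every $a\in I$ the ideal $\langle a\rangle$ is the smallest ideal containing $a$ (part (3)) and so $\langle a\rangle\subseteq I$, whence the join they generate lies in $I$; the reverse inclusion holds because any $x\in I$ already belongs to $\langle x\rangle$ and hence to the join. Thus every ideal is a join of principal, i.e. compact, ideals, and the lattice is algebraic.

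The verifications that $A$ is an ideal and that the intrinsic bottom element exists are straightforward from the definition of ideal, and the algebraicity argument above is essentially a one-line double inclusion. I expect the only point requiring genuine care to be the order-theoretic bridge: confirming that the externally-constructed joins of parts (2) and (4) really are the suprema of $(\mathcal{I}(A),\subseteq)$, and that the compactness characterization of part (7) is being applied to exactly this notion of join. Matching these externally-defined operations with the intrinsic lattice structure is the main, though not severe, obstacle; once it is settled, the theorem follows by assembling the relevant parts of the Lemma with the standard fact that a poset closed under arbitrary infima is a complete lattice.
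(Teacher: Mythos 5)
Your proof is correct and follows exactly the route the paper intends: the theorem is stated immediately after the Lemma and is given no proof in the paper, and your argument assembles precisely the relevant parts of that Lemma --- part (1) for arbitrary meets (with $A$ itself as top element), parts (2) and (4) to identify the externally constructed join with the order-theoretic supremum, and parts (3) and (7) to write every ideal as a join of principal, hence compact, ideals. The only tacit point you inherit from the paper is that the intersection of the family of all ideals is non-empty (i.e., that every ideal contains $0$), which Lemma part (1) already assumes, so nothing further is needed.
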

\begin{cor}
    The ideal lattice of any commutative DRl-semigroup, and consequently the ideal lattice of any commutative l-group, the ideal lattice of a Boolean algebra are all complete algebraic lattices.
\end{cor}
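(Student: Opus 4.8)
The plan is to verify directly, for the poset $\mathcal{I}(A)$ of ideals of $A$ ordered by inclusion, the two defining features of a complete algebraic lattice: completeness, and the property that every element is the supremum of the compact elements below it. For completeness I would first use part (1) of the preceding lemma: the intersection of any non-empty family of ideals is again an ideal, so every family $\{I_\alpha\}$ has a meet, namely $\bigcap_\alpha I_\alpha$. Since $A$ is itself an ideal and is the largest one, $\mathcal{I}(A)$ has a top element, and the standard fact that a poset closed under arbitrary meets with a top element is a complete lattice then yields all joins. By part (4), the join is concretely $\bigvee_\alpha I_\alpha = \langle\bigcup_\alpha I_\alpha\rangle$, the ideal generated by the union. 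This makes $\mathcal{I}(A)$ a complete lattice.

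Next I would establish algebraicity, i.e.\ that each ideal is the join of the compact elements beneath it. The essential input is part (7), which identifies the compact elements of $\mathcal{I}(A)$ as exactly the principal ideals $\langle a\rangle$, $a\in A$. Given an arbitrary ideal $I$, I claim $I=\bigvee_{a\in I}\langle a\rangle$. For the inclusion $\bigvee_{a\in I}\langle a\rangle\subseteq I$ it suffices to show each $\langle a\rangle\subseteq I$ when $a\in I$: by part (3) any $x\in\langle a\rangle$ satisfies $x\le ma$ for some positive integer $m$, while $ma\in I$ by closure under $+$ (ideal axiom (i)), so $x\in I$ by downward closure (ideal axiom (ii)). The reverse inclusion is immediate since $a\in\langle a\rangle$ for every $a\in I$. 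Hence every ideal is a join of compact elements, which is precisely the algebraicity condition. I would also remark that this join is \emph{directed}: by part (6), $\langle a\rangle\vee\langle b\rangle=\langle a+b\rangle$, and $a+b\in I$ whenever $a,b\in I$, so the family $\{\langle a\rangle : a\in I\}$ is closed under finite joins and therefore up-directed, recovering the usual ``directed join of compacts'' formulation.

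Once the preceding lemma is in hand there is no single deep obstacle; the theorem is essentially an assembly of parts (1), (4), (6) and (7). The point that most repays care is the compactness characterization of part (7) itself, on which everything else rests: one must confirm that a principal ideal is genuinely compact, i.e.\ that $\langle a\rangle\subseteq\bigvee_\beta J_\beta$ forces $\langle a\rangle$ into a finite subjoin. This follows from the generated-ideal description of part (2), since $a\le x_1+\dots+x_n$ with each $x_i$ lying in some $J_{\beta_i}$ gives $\langle a\rangle\subseteq J_{\beta_1}\vee\dots\vee J_{\beta_n}$. With completeness and this compactness property secured, the two combine to exhibit $\mathcal{I}(A)$ as a complete algebraic lattice.
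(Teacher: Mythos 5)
Your argument is a correct and carefully assembled proof of the \emph{preceding theorem} (that the ideal lattice of an AL-monoid is a complete algebraic lattice): completeness from closure under arbitrary intersections plus the top element $A$, algebraicity from the identification of compact elements with principal ideals, the decomposition $I=\bigvee_{a\in I}\langle a\rangle$, and the directedness via $\langle a\rangle\vee\langle b\rangle=\langle a+b\rangle$ are all sound and correctly traced back to parts (1), (2), (3), (4), (6) and (7) of the lemma. The paper itself supplies no proof of either the theorem or this corollary, so on that score your write-up is more complete than the source.

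However, the statement you were asked to prove is the \emph{corollary}, and its entire content is the specialization to commutative DRl-semigroups, commutative l-groups, and Boolean algebras --- none of which appear anywhere in your argument. What is missing is the bridge: (i) each of these structures is an instance of an AL-monoid (this is the point of the cited construction of AL-monoids as the common abstraction of Boolean algebras, commutative l-groups, and DRl-semigroups), and (ii) the classical notion of ideal in each case (lattice ideal of a DRl-semigroup, l-ideal of a commutative l-group, Boolean ideal) coincides with the AL-monoid notion of ideal used in the theorem, so that the two ideal lattices are literally the same poset. Point (ii) is not automatic --- for an l-group, for example, the AL-monoid ideals as defined here consist of downward-closed, $+$-closed sets and must be matched with l-ideals via the positive cone, as the paper's earlier remark about lattice ideals and its corollary on congruences of commutative l-groups indicate. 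Without these two identifications the general theorem does not transfer, and the corollary remains unproved; with them, the corollary is immediate and your re-derivation of the general theorem becomes redundant.
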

\begin{theorem}
    \begin{itemize}
        \item[i] The set of all conguerence relations of $A,$ and
        \item[ii] the lattice of ideals of $A$ is isomorphic with the lattice of conguerence relations of $A.$
    \end{itemize}
\end{theorem}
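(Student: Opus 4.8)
The plan is to reduce everything to the one-to-one correspondence between ideals and congruence relations established in the theorem above, together with the theorem that the ideal lattice of $A$ is a complete algebraic lattice. Write $\Phi$ for the map sending an ideal $I$ to the congruence $\theta_I$ defined by $a\equiv b(\theta_I)\Leftrightarrow a\ast b\in I$, and write $\Psi$ for the map sending a congruence $\theta$ to the ideal $N_\theta=\{x\in A\mid x\equiv 0(\theta)\}$. The earlier theorem already shows that $\theta_I$ is a congruence, that $N_\theta$ is an ideal, and that $\Phi$ and $\Psi$ are mutually inverse bijections. Hence the entire content of part (ii) is that this bijection is in fact an \emph{order} isomorphism; part (i) will then follow by transporting the complete algebraic lattice structure from the ideal side.

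First I would check that $\Phi$ is order-preserving. If $I\subseteq J$ and $a\equiv b(\theta_I)$, then $a\ast b\in I\subseteq J$, so $a\equiv b(\theta_J)$; hence $\theta_I\subseteq\theta_J$. Next I would check that $\Psi$ is order-preserving: if $\theta\subseteq\phi$ and $x\in N_\theta$, then $x\equiv 0(\theta)$ forces $x\equiv 0(\phi)$, so $x\in N_\phi$ and $N_\theta\subseteq N_\phi$. Since $\Phi$ and $\Psi$ are mutually inverse and both monotone, $\Phi$ is an order isomorphism of posets, and an order isomorphism between lattices automatically preserves all meets and joins. Thus the lattice of ideals and the lattice of congruences are isomorphic, which is part (ii); in particular meet corresponds to intersection on both sides, and the join of ideals described in the Lemma (items 4 and 5) corresponds to the join of the associated congruences.

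It remains to establish part (i): both structures are complete algebraic lattices. By the earlier theorem the ideal lattice is a complete algebraic lattice. Completeness, compactness of an element, and the property ``every element is a join of compact elements'' are all intrinsic lattice-theoretic notions, hence preserved under the lattice isomorphism $\Phi$. Therefore the congruence lattice is itself a complete algebraic lattice, and its compact elements are precisely the images under $\Phi$ of the compact ideals, which by the Lemma (item 7) are exactly the principal ideals $\langle a\rangle$.

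The step I expect to be the main obstacle is not the monotonicity, which is immediate, but making the two maps rigorously mutually inverse at the level of \emph{elements}, i.e. confirming $N_{\theta_I}=I$ and $\theta_{N_\theta}=\theta$; here one must invoke semiregularity ($x\ast 0=x$ for $x\geq 0$) so that $x\in N_{\theta_I}\Leftrightarrow x\ast 0\in I\Leftrightarrow x\in I$, and the chain of equivalences $a\ast b\in N\Leftrightarrow a\ast b\equiv 0(Q)\Leftrightarrow a\equiv b(Q)$ already used in the previous theorem. Once this identification is clean, the transport of the complete algebraic structure is routine, the only remaining care being to confirm that $\Phi$ preserves \emph{arbitrary} (not merely finite) joins, which is automatic for an order isomorphism of complete lattices and is what makes the matching of compact elements, and hence algebraicity, go through.
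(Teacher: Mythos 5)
The paper states this theorem with no proof at all (no \verb|proof| environment follows it), so there is nothing in the source to compare your argument against; what you have written is a reasonable reconstruction of the intended argument. Your route is the natural one: take the mutually inverse maps $\Phi\colon I\mapsto\theta_I$ and $\Psi\colon\theta\mapsto N_\theta$ from the earlier correspondence theorem, observe that both are monotone, conclude that $\Phi$ is an order isomorphism and hence preserves arbitrary meets and joins, and then transport completeness and algebraicity from the ideal lattice (established in the preceding theorem and Lemma, items 1--7) to the congruence lattice. That correctly yields both part (ii) and the evidently truncated part (i). The one step you rightly flag as delicate is the element-level identity $N_{\theta_I}=I$: semiregularity gives $x\ast 0=x$ only for $x\geq 0$, and the paper's definition of an AL-monoid does not force every element to be $\geq 0$, so for general $x$ you still owe an argument that $x\ast 0\in I$ iff $x\in I$ (the paper's own proof of the correspondence theorem glosses over this too, via $x\equiv 0(Q)\Rightarrow x\ast 0\equiv 0(Q)\Rightarrow y\vee x\equiv 0(Q)$). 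If you either restrict to the positive cone or supply that equivalence explicitly, your proof is complete and supplies what the paper omits.
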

\subsection{Prime ideals in AL-monoids}
\begin{definition}
    Let $A$ be an AL-monoid. Let M be an ideal of $A$. $M$ is called Maximal ideal if and ony if whenever $J$ is an ideal such that $M\subset J \subset A,$ then either $M=J$ or $J=A.$
\end{definition}
\begin{definition}
    Radical of $A$ is the set $R(A)=\wedge \lbrace M| M$ is a maximal ideal of $A\rbrace.$
\end{definition}
\begin{theorem}
    If $M$ is Maximal idealof A, then it is prime ideal.
\end{theorem}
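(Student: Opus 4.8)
The plan is to prove the statement in the standard meet form, so first I would recall that an ideal $P$ of $A$ is \emph{prime} when $a\wedge b\in P$ forces $a\in P$ or $b\in P$. Fix a maximal ideal $M$, suppose $a\wedge b\in M$, and assume $a\notin M$; the goal is to deduce $b\in M$. Since $a\notin M$, the ideal $M\vee\langle a\rangle$ properly contains $M$, so maximality gives $M\vee\langle a\rangle=A$. In particular $b\in A=M\vee\langle a\rangle$, and the join description in the Lemma (part 5) together with the form of the principal ideal $\langle a\rangle$ (part 3) yields an element $m\in M$ and a positive integer $n$ with $b\le m+na$.

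From here I would run a short chain of inequalities to replace $na$ by $n(a\wedge b)$. Working in the positive cone (where $b\le nb$), I have $b=b\wedge nb\le (m+na)\wedge nb$. Using $m\ge 0$ and the fact that the translation $x\mapsto m+x$ distributes over $\wedge$ in a lattice ordered monoid, I get $(m+na)\wedge nb\le (m+na)\wedge(m+nb)=m+(na\wedge nb)$. The key identity $na\wedge nb=n(a\wedge b)$ then gives $b\le m+n(a\wedge b)$. Because $a\wedge b\in M$ and $M$ is closed under $+$, the element $m+n(a\wedge b)$ lies in $M$, so downward closure (ideal axiom (ii)) forces $b\in M$, which is exactly primeness.

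The hard part will be justifying $na\wedge nb=n(a\wedge b)$: the inequality $n(a\wedge b)\le na\wedge nb$ is immediate from isotonicity of $+$, but the reverse is where I would lean on representability of AL-monoids \cite{T}, reasoning in each totally ordered homomorphic image (where $\min$ commutes with the order preserving map $x\mapsto nx$) and lifting through the subdirect embedding; the positive-cone reduction used for $b\le nb$ would also need a line of justification. An alternative route, which I would mention as a cross-check, avoids the pointwise argument entirely: prove that the ideal lattice is distributive and that $\langle a\rangle\cap\langle b\rangle=\langle a\wedge b\rangle$, and then from $M\vee\langle a\rangle=M\vee\langle b\rangle=A$ conclude $A=(M\vee\langle a\rangle)\cap(M\vee\langle b\rangle)=M\vee\langle a\wedge b\rangle=M$, which is a contradiction unless $a\in M$ or $b\in M$.
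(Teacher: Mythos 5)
Your strategy is the standard one (if $a\wedge b\in M$ and $a\notin M$, then $M\vee\langle a\rangle=A$, so $b\le m+na$, and one converts $na$ into $n(a\wedge b)$), and it is genuinely different from the paper's proof, which instead shows that a maximal ideal is \emph{regular} (completely meet-irreducible: if $M=\bigwedge_\alpha J_\alpha$ then $M=J_\alpha$ for some $\alpha$) and then invokes ``every regular ideal is prime'' as a known fact. The problem is that your version has a real gap exactly at the step you yourself identify as the hard part. Everything hinges on $na\wedge nb\le n(a\wedge b)$, and the justification you propose --- pass to totally ordered homomorphic images via representability and a subdirect embedding into chains --- is circular in this paper's framework: the paper's own theorem in Section 6 lists ``$A$ is representable'', ``$A$ is a subdirect product of chains'', and ``there is a family of prime ideals with $\bigwedge_{i}P_i=\{0\}$'' as \emph{equivalent} conditions, so you cannot assume a subdirect decomposition into chains while the existence of prime ideals is still what you are trying to establish, and representability is not an axiom of AL-monoids. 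In a bare commutative lattice ordered monoid even the $n=2$ instance $2a\wedge 2b\le a+b$ can fail (it is a genuine theorem in DRl-semigroups, proved from the residuation axioms); here it would have to be derived from $a\ast(a\wedge b)+b=a\vee b$ and the contraction properties before your chain of inequalities is available. Your fallback route (distributivity of the ideal lattice plus $\langle a\rangle\cap\langle b\rangle=\langle a\wedge b\rangle$) does not avoid this: proving $\langle a\rangle\cap\langle b\rangle\subseteq\langle a\wedge b\rangle$ requires exactly the same inequality.

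There is a second, smaller gap in the positivity assumptions. You use both $b\le nb$ and $nb\le m+nb$, i.e.\ $b\ge 0$ and $m\ge 0$. Nothing in the definition of an AL-monoid forces all elements to be positive, and the paper's ideals are merely $+$-closed and downward closed, so an element $m\in M$ need not satisfy $m\ge 0$; replacing $m$ by $m\vee 0$ is not obviously legitimate since $M$ is not assumed closed under $\vee$ with $0$ a priori. You flag the first reduction but supply neither. Until the identity $na\wedge nb\le n(a\wedge b)$ and the positive-cone reductions are actually proved from the AL-monoid axioms, the argument is a plausible outline rather than a proof.
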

\begin{proof}
    Suppose $M$ is Maximal ideal. We know that regular ideal is prime ideal. Suppose $M=\wedge_{\alpha \in \Gamma}J_{\alpha}.$ Therefore, $M\subset J_{\alpha,\alpha \in \Gamma}.$ Since $M$ is Maximal ideal; $M=J_{\alpha},\alpha\in \Gamma.$ Thus $M$ is regular ideal. Hence, $M$ is prime ideal.
\end{proof}
\begin{example}
    Let $A=\lbrace 0,a,b,c\rbrace$ with usual ordering. Define $'\ast'$ and $'+'$ as following table:
    \begin{multicols}{2}
    \begin{center}
    \begin{tabular}{|c|c|c|c|c|}
    \hline
       $\ast$ & 0 & a & b  &  c\\
        0 & 0& a & b& c\\
        a & a & 0 & b& c\\
        b & b & b& 0 & c\\
        c & c & c & c & 0\\
        \hline
    \end{tabular}
    \end{center}
    \begin{center}
    \begin{tabular}{|c|c|c|c|c|}
    \hline
       $+$ & 0 & a & b  &  c\\
        0 & 0& a & b& c\\
        a & a & a & b& c\\
        b & b & b& b & c\\
        c & c & c & c & c\\
        \hline
    \end{tabular}
    \end{center}
    \end{multicols}
\end{example}
Hence, $\lbrace 0,a\rbrace$ and $\lbrace 0,a,b\rbrace$ are AL-monoid $0\ast (0\wedge a)+a=a.$ and $(0\ast (0\wedge a))\wedge (a\ast (0\wedge a))=0$. Hence it is AL-monoid. hence, $\lbrace 0,a,b\rbrace$ is a maximalideal of $A.$ But, $\lbrace 0,a\rbrace$ is prime but, not maximal.
\begin{theorem}
    Let $P$ be a prime ideal of $A$. Let $M_{1},M_2,...,M_K$ are maximal ideals. Assume that $\wedge_{i=1}^{k} M_{i}=\lbrace 0\rbrace.$ Then, $P=M_{i}$ for some $i.$
\end{theorem}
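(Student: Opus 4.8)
The plan is to combine the defining property of a prime ideal with the maximality of the $M_i$, the whole argument hinging on the observation that the zero ideal is contained in $P$. First I would record that $\{0\}$ is the least element of the ideal lattice, so that $\{0\}\subseteq P$ automatically; together with the hypothesis $\bigwedge_{i=1}^{k}M_i=\{0\}$ this yields the containment $\bigcap_{i=1}^{k}M_i\subseteq P$, which is the only place the assumption $\bigwedge_{i=1}^{k}M_i=\{0\}$ is needed.

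The decisive step is to pass from this finite intersection down to a single $M_i$. Recall that $P$ prime means $P\neq A$ together with the elementwise condition that $a\wedge b\in P$ implies $a\in P$ or $b\in P$. I would first recast this in ideal-theoretic form: if $I\cap J\subseteq P$, then $I\subseteq P$ or $J\subseteq P$. Assuming for contradiction that $a\in I\setminus P$ and $b\in J\setminus P$, the convexity of ideals (established earlier) and the downward-closure axiom force $a\wedge b\in I\cap J\subseteq P$, contradicting primeness; hence one of $I,J$ lies inside $P$. A routine induction on $k$, splitting $M_1\cap(M_2\cap\cdots\cap M_k)$ and applying the two-ideal case at each stage, then gives $M_i\subseteq P$ for some index $i$.

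Finally I would invoke maximality to upgrade containment to equality. Since $P$ is prime it is a proper ideal, so $M_i\subseteq P\subsetneq A$; because $M_i$ is maximal, the only ideals lying between $M_i$ and $A$ are $M_i$ and $A$ themselves, and as $P\neq A$ we are forced to conclude $P=M_i$, which is the assertion. I expect the only genuinely delicate point to be the translation of the elementwise prime condition into the intersection-of-ideals form, since once that is in hand the induction and the maximality argument are both immediate.
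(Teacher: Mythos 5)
Your proposal is correct and follows essentially the same route as the paper's (very terse) proof: from $\bigwedge_{i=1}^{k}M_i=\{0\}\subseteq P$ one extracts, by the prime property and induction on $k$, some $M_i\subseteq P$, and maximality of $M_i$ together with properness of $P$ forces $P=M_i$. The only difference is that you actually supply the two-ideal splitting lemma ($I\cap J\subseteq P$ implies $I\subseteq P$ or $J\subseteq P$, via downward closure of ideals) that the paper's one-line ``by induction on $k$'' silently assumes.
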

\begin{proof}
    We know that $\wedge_{i=1}^{k} M_{i}=\lbrace 0\rbrace.\subset P.$ for some $i$ by induction on $k$. Since $M_{i}$ is Maximal; and hence, $P=M_{i}$ for some $i.$
\end{proof}
\begin{definition}
    An ideal $I$ of $A$ is called a strong ideal if
    \begin{enumerate}
        \item $a\in I$ iff $a\ast I=I$. and
        \item $a\ast I=b\ast I $ iff $a\ast b\in I$ for $a,b\in I.$
    \end{enumerate}
    
\end{definition}
\begin{remark}
    Here $a\in I$ iff $a\ast I=I$ and $a\ast I=b\ast I$ implies $a\ast b\in I$ are not true in any ideal of $A.$ Consider the following example    
    \end{remark}
    \begin{example}
        Let $A=\lbrace 0,a,b,c,d,e\rbrace$ with usual ordering. Define $\ast $ and $+$ as follows.
        $\lbrace 0,a,b\rbrace$ is an idealof $A.$ So, $0\ast I=I, a\ast I=\lbrace a,0\rbrace$ and $b\ast I=I$, we see that $a\in I$, but, $a\ast I\neq I.$ Then $I$ is not strong ideal.
        \begin{center}
        \begin{multicols}{2}
        \begin{tabular}{|c|c|c|c|c|c|c|}
        \hline
            + & 0 & a &b& c& d& e\\
            0&0 & a&b &c&d&e\\
            a&a&b&b&e&d&e\\
            b&b&b&b&e&e&e\\
            c&c&e&e&e&e&e\\
                        d&d&e&e&e&e&e\\
                          e&e&e&e&e&e&e\\
                          \hline
        \end{tabular}
        \begin{tabular}{|c|c|c|c|c|c|c|}
        \hline
            $\ast $ & 0 & a &b& c& d& e\\
            0&0 & a &b&c&d&e\\
            a&a&0&a&c&c&c\\
            b&b&a&0&c&c&c\\
            c&c&c&c&0&a&a\\
                        d&d&c&c&a&0&a\\
                          e&e&c&c&a&a&e0\\
                          \hline
        \end{tabular}
        \end{multicols}
        \end{center}
    \end{example}
    \begin{theorem}
        Every ideal of $A$ is strong ideal.
    \end{theorem}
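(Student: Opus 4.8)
The plan is to verify the two defining clauses of a strong ideal for an arbitrary ideal $I$, using two facts already available: that every ideal is closed under $\ast$ and is convex (the first theorem of this section), and the one-to-one correspondence between ideals and congruences, under which $I$ is recovered as the class of $0$ for the relation $a\equiv b\,(I)\iff a\ast b\in I$. Throughout I read $a\ast I$ as $\{a\ast x:x\in I\}$, and I will use the semiregularity identity $c\ast 0=c$ together with the contraction axiom, which gives $(c\ast x)\ast(c\ast y)\le x\ast y$ and, taking $y=0$, the useful estimate $(c\ast x)\ast c\le x$.

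First I would clear the three implications that need only $0\in I$, closure, and convexity. For clause~(1): if $a\ast I=I$ then $a=a\ast 0\in a\ast I=I$, so $a\in I$; conversely, if $a\in I$ then $a\ast x\in I$ for every $x\in I$ by closure under $\ast$, giving the inclusion $a\ast I\subseteq I$. For the forward half of clause~(2): if $a\ast I=b\ast I$ then $a=a\ast 0\in b\ast I$, so $a=b\ast x$ for some $x\in I$, whence the estimate above yields $a\ast b=(b\ast x)\ast b\le x\in I$, and convexity (downward closure) forces $a\ast b\in I$.

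The substantive point, and the step I expect to be the main obstacle, is the pair of reverse inclusions: $I\subseteq a\ast I$ when $a\in I$ (to finish clause~(1)) and $b\ast I\subseteq a\ast I$ when $a\ast b\in I$ (to finish clause~(2)). These are surjectivity statements: for a given target one must exhibit an explicit preimage in $I$, and this is exactly where the monoid structure has to do the work. My intended route is, given $y\in I$, to take $x:=a+y$, which lies in $I$ because $I$ is closed under $+$, and then to prove $a\ast x=y$: the contraction property of the translation $z\mapsto a+z$ gives $a\ast(a+y)=(a+0)\ast(a+y)\le 0\ast y=y$, and the reverse inequality is where AL-monoid axiom~(2), rewriting $a\ast(a\wedge(a+y))$ as a join, must be invoked. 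The delicate feature here is that every element of $a\ast I$ is nonnegative by the distance axiom $a\ast x\ge 0$, so the identity $a\ast(a+y)=y$ can only hold when $y\ge 0$; making the reverse inclusion go through therefore forces one to use positivity of the relevant elements, and I would isolate this as a lemma rather than bury it in the main argument. For clause~(2) the same construction, combined with the triangle inequality $a\ast x\le(a\ast b)+(b\ast x)$ and the hypothesis $a\ast b\in I$, identifies each $a\ast x$ with some $b\ast x'$ up to an element of $I$, which lands it in $b\ast I$ by downward closure.

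Finally I would assemble the four one-directional statements into the two equivalences, and the cleanest packaging is again the ideal--congruence correspondence: since ``$a\ast I=I$'' amounts to $a\equiv 0\,(I)$ and ``$a\ast I=b\ast I$'' amounts to $a\equiv b\,(I)$, clauses~(1) and~(2) are exactly the defining equivalences $a\equiv 0\,(I)\iff a\ast 0\in I$ and $a\equiv b\,(I)\iff a\ast b\in I$ of that congruence. I would also check the result against the preceding remark, verifying whether the tables displayed there genuinely satisfy all four AL-monoid axioms; reconciling the theorem with that example is in fact the last thing I would want to settle before declaring the proof complete.
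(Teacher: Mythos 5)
The crux of this theorem is the pair of ``surjectivity'' inclusions ($I\subseteq a\ast I$ for $a\in I$, and $b\ast I\subseteq a\ast I$ when $a\ast b\in I$), and that is precisely where your argument is not actually carried out. For $I\subseteq a\ast I$ you propose the witness $x=a+y$ and prove only the inequality $a\ast(a+y)\le 0\ast y$, deferring the reverse inequality to an unspecified invocation of axiom~(2) of the AL-monoid definition. That reverse inequality is false: in a Boolean algebra viewed as an AL-monoid (with $a+b=a\vee b$ and $a\ast b$ the symmetric difference $a\triangle b$) one computes $a\ast(a+y)=a\triangle(a\vee y)=y\wedge a'$, which is strictly below $y$ whenever $a\wedge y\neq 0$; so $a+y$ is simply not a preimage of $y$ under $x\mapsto a\ast x$, and no amount of massaging axiom~(2) will make it one. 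The paper's proof chooses a different witness, namely $a\ast y$ itself, and relies on the involution-type identity $a\ast(a\ast y)=y$, which it derives by treating $\ast$ as associative with $a\ast a=0$. That derivation is itself not licensed by the AL-monoid axioms (and the identity fails in $\ell$-groups), but at least the witness $a\ast y$ is the correct element in the Boolean model, whereas yours is not. So the one step you yourself flag as ``the main obstacle'' is exactly the step that is missing, and the route you sketch for it leads to a dead end.

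A similar problem affects your clause~(2): you say the triangle inequality $a\ast x\le(a\ast b)+(b\ast x)$ identifies $a\ast x$ with some $b\ast x'$ ``up to an element of $I$, which lands it in $b\ast I$ by downward closure.'' But $b\ast I=\{b\ast x:x\in I\}$ is just a set of distances, not an ideal; it is not downward closed, so there is no mechanism for absorbing the error term $a\ast b$ into it. The parts of your proposal that are genuinely complete (the inclusion $a\ast I\subseteq I$ from closure under $\ast$, the implication $a\ast I=I\Rightarrow a\in I$, and the forward half of clause~(2) via $(b\ast x)\ast b\le x$ and convexity) agree in substance with the easy half of the paper's argument; the hard half is absent from your write-up and handled in the paper by the explicit (if questionably justified) computations $a\ast(a\ast x)=x$ and $a\ast((a\ast b)\ast x)=b\ast x$.
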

    \begin{proof}
        \begin{enumerate}
            \item  To show, $a\in I $ iff $ a\ast I=I$. Suppose $a\in I.$ To show $a\ast I=I,$ Let $a\ast x \in a\ast I$ where $x\in I.$ Since $a,x\in I$
and $A$ is AL-monoid, $a\ast x=(a\ast x)\ast 0\leq (a\ast 0+ x\ast 0)=(a\ast 0+x\ast 0)\ast 0.$ Therefore, $a\ast x\in I.$ Then, $a\ast I\subset I.$  \\
$(leftlongarrow)$, Let $x\in I$. So, $a\ast x\in I.$  We know that $a, a\ast x\in I.$ Clearly,$a\ast (a\ast x)\in a\ast I.a\ast (a\ast x)=(a\ast a)\ast x=0\ast x.$ $A$ is AL-monoid implies $a\ast (a\ast x)=x.$ As a result, $x\in I.$ Therefore, $I\subset a\ast I.$ Hence, $a\ast I=I.$ Now, to show $a\ast I=I$ iff $a\in I.$ Suppose $a\ast I=I.$ $x=a\ast y, y\in I.$ Therefore, $y, a\ast y\in I.$ and $(a\ast y)\ast y\in I$. Hence, $a\in I.$\\
\item To show $a\ast I=b\ast I$iff $a\ast b\in I.$  \\Let $a\ast b\in I$. clearly $(a\ast b)\ast I=I$ by (1), so $(a\ast b)\ast x=y,$ for some $x,y\in I.$ Therefore, $a\ast ((a\ast b)\ast x)=a\ast y. (a\ast (a\ast b))\ast x=a\ast y\\ ((a\ast a)\ast (b\ast x)=a\ast y\\b\ast x=a\ast y.$ Hence, $b\ast I=a\ast I.$ 
\end{enumerate}
Conversely, Suppose $a\ast I=b\ast I.$ Therefore, $a\ast x=b\ast y,$ for some $x,y\in I.$ Therefore, $a\ast (a\ast x)=a\ast (b\ast y)$ implies $x=(a\ast b)\ast y.$ Therefore, $(a\ast b)\ast I=I.$ Hence, $a\ast b\in I$ ny (1).
    \end{proof}
    \begin{definition}
       A srong ideal of $A$ is called a distant ideal  if there exists strong ideal $J$ of $A$ such that $I\ast J=A$ and $I\wedge J=\lbrace 0\rbrace.$ The set of all distant ideals of $A$ is denoted by $Dis(A).$ Hence, $\lbrace 0\rbrace, A\in Dis(A).$  
    \end{definition}
\begin{theorem}
    Every Ideal of AL-monoid is Strong ideal.
\end{theorem}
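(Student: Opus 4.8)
The plan is to verify the two defining clauses of a strong ideal directly for an arbitrary ideal $I$ of the AL-monoid $A$, leaning throughout on the involution-type behaviour of the metric operation $\ast$ that is available in AL-monoids but not in bare autometrized algebras: namely $a\ast a=0$, the semiregularity identity $a\ast 0=a$, the cancellation law $a\ast(a\ast x)=x$, and the associativity $a\ast(b\ast c)=(a\ast b)\ast c$. These turn $\ast$ into a symmetric-difference-like operation, and they are exactly what lets the abstract set-theoretic equalities collapse into membership statements.

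First I would treat clause (1), $a\in I\iff a\ast I=I$. For the forward direction assume $a\in I$ and split the equality into two inclusions. The inclusion $a\ast I\subseteq I$ follows from closure of $I$ under $\ast$, which is part of the convex sub-AL-monoid theorem proved above: for $x\in I$ we have $a,x\in I$, hence $a\ast x=(a\ast x)\ast 0\leq a\ast 0+x\ast 0$, and since $a\ast 0+x\ast 0\in I$ the downward-closure of $I$ gives $a\ast x\in I$. For the reverse inclusion $I\subseteq a\ast I$, I would write each $x\in I$ as $x=a\ast(a\ast x)$ by the cancellation law; since $a\ast x\in I$, this exhibits $x$ as an element of $a\ast I$. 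For the converse direction assume $a\ast I=I$; taking $x\in I$ and writing $x=a\ast y$ with $y\in I$, the element $(a\ast y)\ast y=a\ast(y\ast y)=a\ast 0=a$ lies in $I$ because $I$ is closed under $\ast$, giving $a\in I$.

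Next I would establish clause (2), $a\ast I=b\ast I\iff a\ast b\in I$. Assuming $a\ast b\in I$, clause (1) gives $(a\ast b)\ast I=I$, so for $x\in I$ there is $y\in I$ with $(a\ast b)\ast x=y$; applying $a\ast(-)$ and using associativity together with $a\ast a=0$ and $0\ast b=b$ reduces the left-hand side to $b\ast x$, yielding $b\ast x=a\ast y$ and hence $b\ast I=a\ast I$. Conversely, from $a\ast I=b\ast I$ choose $x,y\in I$ with $a\ast x=b\ast y$; applying $a\ast(-)$ to both sides and simplifying with the cancellation and associativity laws gives $x=(a\ast b)\ast y$, so $(a\ast b)\ast I=I$, whence $a\ast b\in I$ by clause (1).

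The routine inclusions and membership arguments are straightforward once the algebraic identities are in hand, so the main obstacle is justifying those identities in full generality. In a bare autometrized algebra $\ast$ is only a distance satisfying symmetry and the triangle inequality, and neither the cancellation law $a\ast(a\ast x)=x$ nor the associativity $a\ast(b\ast c)=(a\ast b)\ast c$ is automatic. The crux is therefore to derive these from the AL-monoid axioms of Definition~\ref{2.1} (or to import them from \cite{T}); in particular the associativity step, on which every nontrivial equality above silently depends, is the point that must be pinned down carefully, together with the implicit use of $a\ast 0=a$, which presupposes positivity of the elements involved.
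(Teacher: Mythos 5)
Your proposal follows the paper's proof essentially step for step: the same two-clause verification, the same use of closure of $I$ under $\ast$ (via $a\ast x=(a\ast x)\ast 0\leq a\ast 0+x\ast 0$) for the inclusion $a\ast I\subseteq I$, the same cancellation trick $x=a\ast(a\ast x)$ for the reverse inclusion and $(a\ast y)\ast y=a$ for the converse, and the same associativity manipulations for clause (2). So in terms of strategy there is nothing to distinguish the two.

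The one substantive point is the caveat you raise at the end, and you are right to raise it: the identities $a\ast(a\ast x)=x$ and $a\ast(b\ast c)=(a\ast b)\ast c$ are not among the AL-monoid axioms of Definition~2.4, and they actually fail in commutative $\ell$-groups (where $a\ast b=|a-b|$ is neither associative nor an involution in the required sense), even though $\ell$-groups are the motivating examples of AL-monoids. The paper's own proof uses these identities silently, so the gap you flag is present there too; note also that the paper exhibits, immediately before this theorem, an example of an ideal that is \emph{not} strong, which contradicts the theorem as stated. Your write-up is therefore a faithful reconstruction of the paper's argument, but neither version is a complete proof until the cancellation and associativity laws are either derived from the axioms (they cannot be, in general) or the class of AL-monoids under consideration is restricted so that they hold.
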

\begin{theorem}
   AL-monoid $A$ is directly indecompasable iff its distant ideals are $\lbrace 0\rbrace$ and $A$ only.
\end{theorem}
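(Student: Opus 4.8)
The plan is to route everything through the one-to-one, lattice-theoretic correspondence between the ideals of $A$ and its congruence relations established above, together with the already-proved fact that every ideal of $A$ is strong, so that the only genuine content in the notion of a \emph{distant} ideal is the existence of a complementary ideal $J$ with $I\wedge J=\{0\}$ and $I\ast J=A$. Writing $\theta_I$ for the congruence attached to an ideal $I$ (that is, $a\equiv b\,(\theta_I)$ iff $a\ast b\in I$), the first step is to observe that under this correspondence $I\wedge J=\{0\}$ translates into $\theta_I\cap\theta_J=\Delta$ (the identity relation), while $I\ast J=A$ should translate into $\theta_I$ and $\theta_J$ being a pair of mutually permuting factor congruences with $\theta_I\vee\theta_J=\nabla$ (the all-relation). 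Thus a distant ideal is exactly the kernel-ideal of one projection of an internal direct decomposition $A\cong A/\theta_I\times A/\theta_J$, and the theorem reduces to matching distant ideals with direct factors.

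For the ($\Leftarrow$) direction I would argue by contraposition: assume $A$ is \emph{not} directly indecomposable, so $A\cong B\times C$ with both $B$ and $C$ nontrivial. Let $\theta_B,\theta_C$ be the kernels of the two projections; these are complementary factor congruences, $\theta_B\cap\theta_C=\Delta$ and $\theta_B\circ\theta_C=\nabla$, with $A/\theta_B\cong B$ and $A/\theta_C\cong C$. Passing through the correspondence, the ideals $I=\{x:x\equiv 0(\theta_B)\}$ and $J=\{x:x\equiv 0(\theta_C)\}$ are strong, satisfy $I\wedge J=\{0\}$ and $I\ast J=A$, so $I$ is distant. Since $B$ is nontrivial we have $\theta_B\neq\nabla$, hence $I\neq A$; since $C$ is nontrivial we have $\theta_C\neq\nabla$, and because $\theta_B\vee\theta_C=\nabla$ this forces $\theta_B\neq\Delta$, hence $I\neq\{0\}$. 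So $I$ is a distant ideal different from $\{0\}$ and $A$, which is the desired contrapositive.

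For the ($\Rightarrow$) direction, assume the only distant ideals are $\{0\}$ and $A$, and let $I$ be an arbitrary distant ideal with complement $J$. I would define $\varphi\colon A\to A/I\times A/J$ by $\varphi(a)=(a/\theta_I,\,a/\theta_J)$. That $\varphi$ is a homomorphism is immediate, and $\varphi(a)=\varphi(b)$ holds iff $a\ast b\in I$ and $a\ast b\in J$, i.e. $a\ast b\in I\wedge J=\{0\}$, i.e. $a=b$; so $\varphi$ is injective, using $I\wedge J=\{0\}$. Surjectivity is where $I\ast J=A$ enters: given a target $(a/\theta_I,\,b/\theta_J)$ one must produce $c$ with $a\ast c\in I$ and $b\ast c\in J$, i.e. realize the Chinese-remainder witness for $\theta_I\circ\theta_J=\nabla$. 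Once $\varphi$ is shown to be an isomorphism we get $A\cong A/I\times A/J$; by hypothesis $I=\{0\}$ or $I=A$, so one of the factors collapses to the trivial algebra, showing that $A$ admits only trivial direct decompositions and is therefore directly indecomposable.

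The main obstacle is precisely the surjectivity of $\varphi$, equivalently the permutability $\theta_I\circ\theta_J=\nabla$: meet-and-join complementarity in the congruence lattice alone does not in general yield a direct decomposition, and I expect the condition $I\ast J=A$ to be engineered exactly to supply what is missing. Concretely, given $a,b\in A$ I would use $I\ast J=A$ to express (or bound) the distance $a\ast b$ in the form $i\ast j$ with $i\in I$, $j\in J$, and then assemble an explicit $c$ from the metric structure of an AL-monoid — the identity $a\ast(a\wedge b)+b=a\vee b$, the contraction properties of $+,\vee,\wedge,\ast$, and the isometry of $\ast$ already used in the ideal-congruence theorem — so that $a\ast c\in I$ and $b\ast c\in J$. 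Verifying that this $c$ lands in the correct class modulo each of $\theta_I,\theta_J$ is the one genuinely computational step; the remainder is bookkeeping through the established lattice isomorphism between ideals and congruences.
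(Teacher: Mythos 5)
Your overall strategy is sound and in fact mirrors the machinery the paper itself develops: the paper leaves the proof of this theorem empty, but the two implications you outline are exactly the content of two later theorems in Section 5, namely that $\ker\pi_1$ and $\ker\pi_2$ are distant ideals of $A_1\times A_2$ (which gives your contrapositive direction, once transported along the isomorphism $A\cong B\times C$), and that distant ideals $I,J$ yield $A\cong A/I\times A/J$ (which gives the forward direction). So the reduction you describe is the intended one.

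The genuine gap is the step you yourself flag and then do not carry out: surjectivity of $\varphi\colon A\to A/I\times A/J$. Saying that you ``would use $I\ast J=A$ to express the distance in the form $i\ast j$ and then assemble an explicit $c$'' is a statement of intent, not an argument, and it is precisely the only place where the hypothesis $I\ast J=A$ does any work. The witness can be produced concretely: given a target $(x\ast I,\,y\ast J)$, use $A=I\ast J$ to write $x=a_1\ast b_1$ and $y=a_2\ast b_2$ with $a_1,a_2\in I$ and $b_1,b_2\in J$, and take $c=b_1\ast a_2$; then, since $I$ is strong, $c\ast I=(b_1\ast I)\ast(a_2\ast I)=(b_1\ast I)\ast I=b_1\ast I=(a_1\ast b_1)\ast I=x\ast I$, and symmetrically $c\ast J=y\ast J$. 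Without some such computation the forward direction is unproved, because, as you correctly note, meet- and join-complementarity of $\theta_I$ and $\theta_J$ in the congruence lattice does not by itself give permutability. A smaller point: in the backward direction you should also verify (rather than only assert via the translation to factor congruences) that $\ker\pi_1\ast\ker\pi_2=A_1\times A_2$ and $\ker\pi_1\wedge\ker\pi_2=\{0\}$; the paper does this separately, using $(0,y)\ast(x,0)=(x,y)$.
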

\begin{proof}
    
\end{proof}
\section{Spectra of AL-monoids}
\begin{definition}
    Spec(A) is the set of all prime ideals.
\end{definition}
\begin{theorem}
    Let $A$ be AL-monoid and let $P,Q\in Spec(A)$ such that P||Q, then $P$ and $Q$ in $Spec(A)$ disjoint neighbourhoods.
\end{theorem}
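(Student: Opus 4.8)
The plan is to carry out the separation inside the hull--kernel topology on $Spec(A)$, whose basic open sets are $D(x)=\{R\in Spec(A)\colon x\notin R\}$, and to use the convention that an ideal $R$ is prime exactly when $a\wedge b\in R$ forces $a\in R$ or $b\in R$. Since $P\parallel Q$ means that $P$ and $Q$ are incomparable, I may fix elements $a\in P\setminus Q$ and $b\in Q\setminus P$. The whole argument then reduces to manufacturing from $a$ and $b$ a pair of elements that are disjoint in the lattice sense and that respectively avoid $P$ and avoid $Q$; the open sets $D(\cdot)$ attached to such a pair will be the required neighbourhoods.

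First I would set $u:=a\ast(a\vee b)$ and $v:=b\ast(a\vee b)$. By condition (4) of the defining axioms of an AL-monoid (\cite{T}) these satisfy $u\wedge v=0$ with no further work, since that axiom is precisely a disjointness statement about the two distances to the join $a\vee b$. Thus $u$ and $v$ are the candidate separating elements.

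Next I would certify the non-memberships using condition (2) together with the absorption laws. Instantiating axiom (2) at the pair $(a\vee b,\ a)$ and using $(a\vee b)\wedge a=a$, $(a\vee b)\vee a=a\vee b$, and commutativity of $\ast$ yields $u+a=a\vee b$; the symmetric instantiation at $(a\vee b,\ b)$ yields $v+b=a\vee b$. Now suppose $u\in P$. Since $P$ is closed under $+$ and contains $a$, this gives $a\vee b=u+a\in P$, and then $b\le a\vee b$ together with the order-ideal property forces $b\in P$, contradicting $b\notin P$. Hence $u\notin P$, i.e.\ $P\in D(u)$. Interchanging the roles of $a,b$ and of $P,Q$ in the same argument gives $v\notin Q$, i.e.\ $Q\in D(v)$.

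Finally I would verify that the two neighbourhoods are disjoint. For any $R\in Spec(A)$ we have $u\wedge v=0\in R$, so primality of $R$ gives $u\in R$ or $v\in R$; equivalently $R\notin D(u)$ or $R\notin D(v)$, whence $D(u)\cap D(v)=\emptyset$. Therefore $D(u)$ and $D(v)$ are disjoint open neighbourhoods of $P$ and $Q$, as required. The steps I expect to be most delicate, and where I would take the greatest care, are fixing the intended topology and prime-ideal convention (the statement leaves the topology implicit) and the instantiation of axiom (2) that converts the formal distance $a\ast(a\vee b)$ into the additive identity $u+a=a\vee b$; once these identities are secured the separation is immediate, and the content of the theorem is really that axiom (4) supplies the disjoint pair while axiom (2) witnesses that each element escapes the appropriate prime.
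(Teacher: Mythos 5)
Your proposal is correct and follows essentially the same route as the paper: fix $a\in P\setminus Q$, $b\in Q\setminus P$, use axiom (2) to show each of the two distance elements escapes the appropriate prime, and use axiom (4) plus primality to make the basic open sets $S(\cdot)$ disjoint. The only (harmless) difference is that you take $u=a\ast(a\vee b)$, $v=b\ast(a\vee b)$ so that axiom (4) gives $u\wedge v=0$ verbatim, whereas the paper works with $a\ast(a\wedge b)$ and $b\ast(a\wedge b)$; your choice is in fact slightly cleaner since the disjointness then needs no further justification.
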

\begin{proof}
    Let $P,Q\in Spec(A), P\nsubseteq Q$ and $Q\nsubseteq P.$ Then, there exist $0<a\in A, 0<b\in A$ such that $a\in P\ Q$ and $b\in Q\ P$. Let $a=a\ast (a\wedge b)$ and $b\ast (a\wedge b).$ Let us show that $u\neq Q$ and $v\neq p.$ let, for example, $u\in Q.$ We know that $a=(a\wedge b)+(a\ast a\wedge b)=a\wedge b+u,,$ and since $a\wedge b\in Q,$ we have $a\in Q,$ a contradiction. Hence, $P\in S(U), Q\in S(V)$ and $u\wedge v=0.$ Thus, $S(u)\wedge S(V)=S(U\wedge V)=\empty.$
\end{proof}
If $X\subset Spec(A),$ then the topology of $X$ induced by the spectra topology of $Spec(A)$ will be called the spectral topology on $X.$
\begin{corollary}
    If $X\subset Spec(A)$ is a set of pairwise non-comparable prime ideals, then, the spectral topology of $X$ is $T_{2}-$ Topology.
\end{corollary}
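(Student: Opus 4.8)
The plan is to reduce the statement to a pointwise application of the theorem just established and then transport the separation to the subspace topology. First I would fix two arbitrary distinct points $P, Q \in X$. By hypothesis $X$ consists of pairwise non-comparable prime ideals, so $P \nsubseteq Q$ and $Q \nsubseteq P$, that is $P \parallel Q$; this is exactly the hypothesis under which the preceding theorem applies.

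Next I would invoke that theorem to obtain, inside $Spec(A)$, basic spectral-open sets separating $P$ and $Q$. Explicitly, choosing witnesses $a \in P \setminus Q$ and $b \in Q \setminus P$ and setting $u = a \ast (a \wedge b)$ and $v = b \ast (a \wedge b)$, the theorem yields $P \in S(u)$, $Q \in S(v)$ and $S(u) \cap S(v) = S(u \wedge v) = \emptyset$. Thus $S(u)$ and $S(v)$ are disjoint open neighbourhoods of $P$ and $Q$ in the spectral topology of $Spec(A)$.

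Then I would pass to $X$. By the definition of the spectral topology on $X$ as the topology induced from $Spec(A)$, the traces $S(u) \cap X$ and $S(v) \cap X$ are open in $X$; they contain $P$ and $Q$ respectively, and they remain disjoint since $S(u) \cap S(v) = \emptyset$. As $P$ and $Q$ were arbitrary distinct points of $X$, every pair of distinct points of $X$ is separated by disjoint open sets, which is precisely the $T_2$ axiom, so the spectral topology on $X$ is Hausdorff.

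I do not anticipate a substantive obstacle: the corollary is essentially a packaging of the theorem, the only things to verify being that ``pairwise non-comparable'' delivers the hypothesis $P \parallel Q$ for every distinct pair, and that disjointness of open sets is inherited by their intersections with $X$. Both are immediate, so the entire content is carried by the separation result proved above; the role of the non-comparability assumption on $X$ is simply to guarantee that the theorem is applicable at every pair of points, which is exactly what can fail for a general subset of $Spec(A)$.
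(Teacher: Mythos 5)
Your argument is correct and is exactly the intended one: the paper states this as an immediate corollary of the preceding separation theorem (giving no explicit proof), and your reduction — non-comparability of each distinct pair $P,Q\in X$ yields $P\parallel Q$, the theorem gives disjoint basic open sets $S(u)$, $S(v)$, and their traces on $X$ separate $P$ and $Q$ in the induced spectral topology — is precisely how that implication is meant to be filled in. No gaps; the approach matches the paper's.
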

If $X\subset Spec(A)$ and $M\subset A,$ put $S_{x}(M)=S(M)\wedge x$ denoted by $m(A)$ the set of minimal and by $M(A)$ the set of all maximal prime ideals of AL-monoids.
\begin{theorem}
The spectral topology of $m(A)$ is a $T_{2}-$ Topology and the sets $S_{m(A)}(a)=\lbrace P\in m(A); a\notin P\rbrace, a\in A,$ forms a basis of the space $m(A)$ is composed by closed subsets.  
\end{theorem}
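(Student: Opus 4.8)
The plan is to split the statement into three independent assertions: that $m(A)$ is Hausdorff, that the sets $S_{m(A)}(a)$ form a basis, and that each such basic set is closed. I would dispatch the $T_2$ claim by reducing it to the preceding Corollary. The key observation is that two distinct minimal primes are always incomparable: if $P,Q\in m(A)$ with $P\subseteq Q$, then $P$ is a prime ideal sitting inside the minimal prime $Q$, so minimality of $Q$ forces $P=Q$. Hence $m(A)$ is a set of pairwise non-comparable prime ideals, and the Corollary asserting that the spectral topology on such a set is a $T_2$-topology applies verbatim.

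Next I would verify the basis claim through the lattice identity $S(a)\cap S(b)=S(a\wedge b)$. The right-to-left inclusion uses only that an ideal is downward closed: since $a\wedge b\le a$ and $a\wedge b\le b$, membership of either $a$ or $b$ in $P$ forces $a\wedge b\in P$. The reverse inclusion is exactly primeness of $P$, namely $a\notin P$ and $b\notin P$ imply $a\wedge b\notin P$. Intersecting with $m(A)$ yields $S_{m(A)}(a)\cap S_{m(A)}(b)=S_{m(A)}(a\wedge b)$, so the family $\{S_{m(A)}(a):a\in A\}$ is closed under finite intersection; since every proper prime omits some element of $A$, the family covers $m(A)$ and therefore is a basis for the spectral topology.

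The substantive part is showing that each $S_{m(A)}(a)$ is closed, equivalently that its complement $\{P\in m(A):a\in P\}$ is open. For this I would establish the standard characterization of minimal primes: for $P\in m(A)$ one has $a\in P$ if and only if there exists $b\notin P$ with $a\wedge b=0$. Granting this, the complement rewrites as $\bigcup\{S_{m(A)}(b):a\wedge b=0\}$, a union of basic open sets, hence open; so $S_{m(A)}(a)$ is clopen and the displayed family is a basis of closed subsets, as claimed.

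I expect this minimal-prime characterization to be the main obstacle. The forward implication is immediate: if $b\notin P$ and $a\wedge b=0\in P$, primeness gives $a\in P$. The converse carries the weight. Assuming $a\in P$ but $a\wedge d\neq 0$ for every $d$ in the filter $D=A\setminus P$ (which is upward closed by the ideal property and closed under $\wedge$ by primeness), I would form the filter generated by $D\cup\{a\}$, observe that the hypothesis keeps $0$ out of it, and invoke a prime-separation argument to extract a prime ideal $Q\subseteq P$ with $a\notin Q$; since $a\in P\setminus Q$ this gives $Q\subsetneq P$, contradicting minimality. The delicate points to confirm are that $D$ genuinely behaves as a filter and that the separation step is legitimate, which rests on distributivity of the lattice reduct; this is available in the present setting through the representability of AL-monoids, and I would record that dependence explicitly.
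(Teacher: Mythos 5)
Your proposal is correct and follows essentially the same route as the paper: the $T_2$ claim comes from the pairwise non-comparability of minimal primes together with the preceding Corollary, and closedness of $S_{m(A)}(a)$ comes from writing its complement in $m(A)$ as the union of the basic open sets $S_{m(A)}(b)$ over $b$ in the polar of $a$, which is exactly the paper's assertion that for a minimal prime $P$ either $a\in P$ or $a^{\perp}\nsubseteq P$. The only difference is one of completeness: the paper states this polar characterization of minimal primes without justification, whereas you supply the filter-separation argument for it (and correctly flag that the separation step, producing an AL-monoid ideal rather than a mere lattice ideal, is the point that needs the distributivity/representability hypotheses).
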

\begin{proof}
    Obviously, the sets $S_{m}(a),$ where $a\in A,$ form a basis of spectral topoplogy of $m(A).$ Let $a\in A$ and let $P$ be a minimal prime ideal in $A,$ then $a\in P$ or $a^{1}\nsubseteq P.$ Hence, $S_{m(A)}(a)\wedge S_{m(A)}(a^{1}=\empty$ and $S_{m(A)}\vee S_{m(A)}(a^{1})=m(A).$ Therefore, since $S_{m}(a^{1})$ is open, $S_{m(A)}$ is closed.
\end{proof}
Let, $0\neq a\in A$ and $V(a)$ is the ste of all values of $a,$ i.e., the set of all ideals maximals with respect to the property of not containinig $a.$ (For, $a=0,$ put $V(a)=\empty).$ Let $P\in S(a)$ then, the set of all ideals in $A$ containing $P$ is linearly ordered and there are ideals in $Val(a)$ that conatining $P$. Hence, there is exactly on $M_{p}\in Val(a)$ such that $P\subset M_{P}.$\\

Let us denote by $\mu_{a}: S(a)\rightarrow Val(a)$ the mapping such that $\mu: P\rightarrow M_{p}$.
\begin{proposition}
    The mapping $\mu_{a}$ is continuous.
\end{proposition}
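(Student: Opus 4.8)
The plan is to verify continuity against the subbasis of the spectral (hull–kernel) topology. Since the sets $S(b)=\{Q\in\mathrm{Spec}(A):b\notin Q\}$, $b\in A$, form a subbasis, the topology induced on $\mathrm{Val}(a)\subseteq\mathrm{Spec}(A)$ has subbasic open sets $S(b)\cap\mathrm{Val}(a)=\{M\in\mathrm{Val}(a):b\notin M\}$. Thus it suffices to show that for every $b\in A$ the set $\mu_a^{-1}\bigl(S(b)\cap\mathrm{Val}(a)\bigr)=\{P\in S(a):b\notin M_P\}$ is open in $S(a)$.

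First I would pin down membership in $M_P$ by the equivalence $b\notin M_P\iff a\in\langle P\cup\{b\}\rangle$. Indeed, if $b\in M_P$ then $\langle P\cup\{b\}\rangle\subseteq M_P$, and since $a\notin M_P$ we get $a\notin\langle P\cup\{b\}\rangle$; conversely, if $b\notin M_P$ then $\langle P\cup\{b\}\rangle$ is an ideal containing $P$ and $b$, so it is not contained in $M_P$, and because all ideals containing $P$ form a chain it must strictly contain $M_P$, whence maximality of $M_P$ with respect to not containing $a$ forces $a\in\langle P\cup\{b\}\rangle$. Using the Lemma (parts (2),(3),(5)) to describe the generated ideal, $a\in\langle P\cup\{b\}\rangle=P\vee\langle b\rangle$ means $a\le p+mb$ for some $p\in P$ and some positive integer $m$. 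Finally, the defining identity $a\ast(a\wedge mb)+(a\wedge mb)=a\vee(a\wedge mb)=a$ shows that $a\le p+mb$ with $p\in P$ is equivalent to $a\ast(a\wedge mb)\in P$. Hence $\mu_a^{-1}\bigl(S(b)\cap\mathrm{Val}(a)\bigr)=\{P\in S(a):\,a\ast(a\wedge mb)\in P\text{ for some }m\ge1\}$.

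The delicate point is that this description involves membership in $P$ rather than non-membership, so it is not literally a union of subbasic open sets, and one should resist the tempting guess that the preimage equals $S(a)\cap S(b)$: in general $b$ may lie in $M_P\setminus P$, so $S(a)\cap S(b)$ is strictly larger. To establish openness I would pass to the quotient $A/P$, which is totally ordered because $P$ is prime; there $a\ast(a\wedge mb)\in P$ becomes $\bar a\le m\bar b$, so $P$ lies in the preimage exactly when $\bar a$ is dominated by a multiple of $\bar b$ in $A/P$. The real work is then to produce, for each such $P$ (with a fixed witnessing $m$ and $p=a\ast(a\wedge mb)\in P$), a subbasic neighbourhood $S(a)\cap S(c)$ of $P$ contained in the preimage; here I would exploit that all ideals above $P$ are linearly ordered, together with primeness (so that $S(u\wedge v)=S(u)\cap S(v)$), to choose a suitable $c$ built from $a$ and $mb$. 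Verifying that this neighbourhood stays inside the preimage, i.e.\ that the relation $a\in\langle Q\cup\{b\}\rangle$ persists for all nearby primes $Q$, is the step I expect to be the main obstacle, and is where the representability of $A$ is likely to be needed.
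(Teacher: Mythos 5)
Your proposal sets up the problem correctly (reduce to subbasic opens $S(b)\cap\mathrm{Val}(a)$, and note correctly that the preimage is \emph{not} simply $S(a)\cap S(b)$), and the translation of $b\notin M_P$ into ``$a\ast(a\wedge mb)\in P$ for some $m$'' is a reasonable reformulation. But the proof has a genuine gap at exactly the point you flag yourself: you never produce the element $c$ with $P\in S(a)\cap S(c)\subseteq\mu_a^{-1}(S(b)\cap\mathrm{Val}(a))$, nor show that the relation $a\in\langle Q\cup\{b\}\rangle$ persists for primes $Q$ near $P$. Since the preimage is characterized by a \emph{membership} condition ($a\ast(a\wedge mb)\in P$) rather than a non-membership condition, openness is precisely the nontrivial content of the proposition, and deferring it as ``the main obstacle'' means the argument is a plan rather than a proof. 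As written, nothing in the proposal establishes that the preimage of an open set is open.

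For comparison, the paper avoids describing the preimage explicitly and argues topologically: given a basic neighbourhood $U=S(b)\cap\mathrm{Val}(a)$ of $M_P$, it separates $M_P$ from each $Q\in\mathrm{Val}(a)\setminus S(b)$ using the Hausdorff property of pairwise non-comparable primes (the earlier theorem on disjoint neighbourhoods), invokes compactness of the closed subset $S(a)\setminus S(b)$ of $S(a)$ to pass to finitely many such separations, and thereby builds a \emph{closed} neighbourhood $C$ of $M_P$ with $C\cap\mathrm{Val}(a)\subseteq U$. Because closed sets in the hull--kernel topology are upward closed under inclusion of primes, $P'\in C$ forces $M_{P'}\in C$, giving $C\subseteq\mu_a^{-1}(U)$; and $C$ is also a neighbourhood of $P$ since $P\subseteq M_P$. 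If you want to salvage your direct approach you would need to carry out the construction of $c$ in full; otherwise the compactness argument is the standard way to close this gap.
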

\begin{proof}
    Let $a\in A, P\in S(A)$ and let $U$ be a neighbourhood of $M_{P}$ in Val(a). We can suppose that $U=S(b)\wedge Val(a)$ for some $b\in A.$ If $Q\in Val(a)\ S(b),$ then, we can choose a neighbourhood $U_{Q}$ of $Q$ and neighbourhood $V_{Q}$ of $M_{P}$ such that $U_{Q}\wedge V_{Q}=\empty.$
    \\
    It is evidient that all $U_{Q},$ where $Q$ runs over $Val(a)\ S(b).$ Since $S(a)$ is compact and $S(a)\ S(b)$ is closed in $S(a), S(a)\ S(b)$ is compact, too. Hence there exist $n\in \mathbb{N}$ and $Q_{1}, Q_{2},...,Q_{n}\in S(a)\ S(b)$ such that $S9a)\ S(b)\subset U_{Q_{1}}\vee...\vee U_{Q_{n}}.$ Let us denote $C=S(a)\ (U_{Q}\vee ... \vee Q_{n}).$ We have $V_{Q_{1}\wedge ...\wedge V_{Q_{n}}}\subset C,$ therefore $C$ is a neighbourhood of $M_{P}$ which is closed in $S(a),$ and $C\wedge Val(a)\subset U.$ Therefore, $C\subset \mu^{-1}(C\wedge Val(a))\subset \mu^{-1}(U).$ Moreover, $C,$ which is a neighbourhood of $M_{P},$ is also a neighbourhood of $P.$
\end{proof}
 \begin{proposition}
     $V(a)$ is a $T_{2}-$space. Further, $Val(a)$ is the image of the compact set $S(a)$ in the mapping $\mu_{a}$ which is, by above proposition, continous, hence $Val(a)$ is also compact.
 \end{proposition}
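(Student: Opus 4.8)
The plan is to treat the two assertions of the proposition separately, since each follows from material already in hand.

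For the Hausdorff assertion, I would reduce to the Corollary stating that the spectral topology on any set of pairwise non-comparable prime ideals is a $T_2$-topology. The point to verify is that $Val(a)$ is exactly such a set. Every element of $Val(a)$ is an ideal maximal with respect to the property of not containing $a$, and in the present setting such an ideal is prime, so $Val(a)\subseteq Spec(A)$. Moreover, any two distinct values $M,N$ of $a$ must be incomparable: if we had $M\subsetneq N$, then $N$ would be a strictly larger ideal still omitting $a$, contradicting the maximality that defines $M$ as a value. Hence $Val(a)$ consists of pairwise non-comparable prime ideals, and the Corollary applies verbatim to give that $V(a)=Val(a)$ is $T_2$.

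For the compactness assertion, I would first confirm that $\mu_a$ carries $S(a)$ onto $Val(a)$, so that the image description in the statement is correct. Since each value is a prime ideal not containing $a$, we have $Val(a)\subseteq S(a)$; and for $M\in Val(a)$ the unique value containing $M$ — the ideal $M_P$ used to define $\mu_a$ — is $M$ itself, whence $\mu_a(M)=M$. This shows $\mu_a(S(a))=Val(a)$. Now $S(a)$ is compact and, by the preceding proposition, $\mu_a$ is continuous; therefore $Val(a)$, being the continuous image of a compact space, is compact.

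The routine half is the compactness argument, which is just ``continuous image of a compact set is compact'' once surjectivity and continuity are in place. The main thing to pin down carefully is the input that $S(a)$ is compact in the spectral topology — the quasi-compactness of the basic open sets of the hull–kernel spectrum — together with the facts that values are prime and that the ideals above a fixed prime form a chain, which is precisely what makes $\mu_a$ well defined and surjective. I expect that the chain property and the primeness of values are the load-bearing structural facts; granting them, the topological conclusion is immediate.
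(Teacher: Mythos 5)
Your proposal is correct and follows essentially the same route as the paper, which gives no separate proof but embeds the argument in the statement itself: the $T_2$ property comes from the earlier corollary on pairwise non-comparable prime ideals (you rightly check that distinct values of $a$ are incomparable and prime), and compactness is the continuous image of the compact set $S(a)$ under $\mu_a$. Your additional care in verifying that $\mu_a$ fixes each element of $Val(a)$ and hence is surjective, and in flagging that the compactness of $S(a)$ is an unproved input the paper also relies on, only makes the argument more complete than the original.
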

The following theorem is an immidiate consequence.
\begin{theorem}
The space $M(A)$ of all its maximal prime ideals is a $T_{2}-$ space. If there exists $b\in A$ such that $I(b)=A$ then, $M(A)$ is compact.
\end{theorem}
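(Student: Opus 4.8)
The plan is to handle the two assertions separately: the Hausdorff property will come straight from the Corollary on pairwise non-comparable prime ideals, while the compactness will be transported from the Proposition asserting that $Val(b)$ is compact. Both ingredients are already in place, so the work is essentially in setting up the right identifications.

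First I would verify that $M(A)$ is $T_2$. The members of $M(A)$ are prime ideals that are maximal among prime ideals, so any two distinct elements $P,Q\in M(A)$ are incomparable: if $P\subset Q$ held, then $P$ could not be maximal among primes. Hence $M(A)$ is a set of pairwise non-comparable prime ideals, and the Corollary immediately yields that its spectral topology is a $T_2$-topology. This disposes of the first claim without further computation.

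For the compactness claim, suppose $b\in A$ satisfies $I(b)=A$. The first observation is that $b$ lies outside every proper ideal: if $b\in M$ for a proper ideal $M$, then $A=I(b)\subseteq M$ forces $M=A$, a contradiction. Thus for ideals the conditions ``does not contain $b$'' and ``is proper'' coincide. The consequence I would draw is that the ideals maximal with respect to not containing $b$ are exactly the maximal ideals, and by the earlier theorem (every maximal ideal is prime, and is then maximal among primes, since any prime above it is a proper ideal containing it). Conversely, using that the ideals of $A$ form a complete algebraic lattice, every proper ideal extends to a maximal ideal, so every maximal prime ideal is in fact a maximal ideal and therefore a value of $b$. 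This gives the set-level identification $M(A)=Val(b)$, with the spectral topology on $M(A)$ matching that on $Val(b)$ as subspaces of $Spec(A)$. The Proposition then states that $Val(b)$, being the continuous image under $\mu_b$ of the compact space $S(b)$, is compact, and transporting this across $M(A)=Val(b)$ shows $M(A)$ is compact.

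I expect the main obstacle to be exactly the identification $M(A)=Val(b)$: one must check carefully that every value of $b$ really is a maximal prime ideal (using that, under $I(b)=A$, ``not containing $b$'' is the same as ``proper'', so a value of $b$ is a maximal ideal, which is prime by the earlier theorem) and, in the other direction, that every maximal prime ideal is a maximal ideal (which relies on the completeness of the ideal lattice to extend any proper ideal to a maximal one). It is precisely here that the hypothesis $I(b)=A$ does the essential work, and the remaining care is to confirm that the bijection respects the two spectral topologies so that compactness genuinely transfers.
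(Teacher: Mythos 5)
Your proof is correct and follows essentially the route the paper intends: the paper supplies no written proof, declaring the theorem an ``immediate consequence'' of the corollary that a set of pairwise non-comparable prime ideals is $T_{2}$ (which gives the first claim, since distinct maximal prime ideals are incomparable) and of the proposition that $Val(b)$ is compact (which gives the second claim via your identification $M(A)=Val(b)$ when $I(b)=A$). The one point to tighten is that producing a maximal ideal above a given maximal prime ideal is not a consequence of completeness of the ideal lattice alone; it requires Zorn's lemma applied to the ideals not containing $b$, where the union of a chain of such ideals still omits $b$ and is therefore proper --- which is exactly where the hypothesis $I(b)=A$ (equivalently, compactness of the top element of the ideal lattice) is used.
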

\section{Homomorphism of Ideals in AL-monoids.}
\begin{theorem}
    Let $A$ be an AL-monoids. Let $M$be an ideal of $A.$ Let $A/M=\lbrace a\ast M| a\in A\rbrace.$ For any $a\ast M, be\ast M \in A/M,$ define the operations:
    \begin{eqnarray*}
        (a\ast M)+(b\ast M)=(a+b)\ast M\\
        (a\ast M)\ast (b\ast M)=(a+b)\ast M\\
        (a\ast M)\leq (b\ast M) iff (a\leq b)
    \end{eqnarray*} Then, $(A/M, +,\leq,\ast,0)$ is called Quotient AL-monoid of $A$ by ideal $M.$
\end{theorem}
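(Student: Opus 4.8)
The plan is to recognise that $A/M$ is nothing but the quotient of $A$ by the congruence relation $\theta_M$ that the ideal $M$ induces, so that both well-definedness and the AL-monoid laws follow from the ideal--congruence correspondence already established. First I would record the coset identity: since $M$ is an ideal, hence a strong ideal, condition (2) of the strong-ideal definition gives $a\ast M=b\ast M$ if and only if $a\ast b\in M$, and by the correspondence theorem this is exactly $a\equiv b\,(\theta_M)$. Consequently the distinct cosets $a\ast M$ are precisely the congruence classes of $\theta_M$, and the map $\pi\colon A\to A/M$, $\pi(a)=a\ast M$, is the canonical surjection onto $A/\theta_M$.

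Second, I would establish that the defining formulas do not depend on the chosen representatives. Suppose $a\ast M=a'\ast M$ and $b\ast M=b'\ast M$, i.e. $a\equiv a'\,(\theta_M)$ and $b\equiv b'\,(\theta_M)$. Because $\theta_M$ is a congruence it satisfies $c_1$--$c_5$, so $a+b\equiv a'+b'$, $a\ast b\equiv a'\ast b'$, $a\vee b\equiv a'\vee b'$ and $a\wedge b\equiv a'\wedge b'$. Translating back through the coset identity gives $(a+b)\ast M=(a'+b')\ast M$ and $(a\ast b)\ast M=(a'\ast b')\ast M$, so the sum and the metric $(a\ast M)\ast(b\ast M)=(a\ast b)\ast M$ on $A/M$ are well defined, and the induced lattice operations $\vee,\wedge$ are well defined for the same reason.

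Third, with $\pi$ a homomorphism by construction, I would verify the AL-monoid axioms in $A/M$ by pushing each identity of $A$ through $\pi$. For instance the metric identity becomes
\[
\pi(a)\ast\bigl(\pi(a)\wedge\pi(b)\bigr)+\pi(b)=\pi\bigl(a\ast(a\wedge b)+b\bigr)=\pi(a\vee b)=\pi(a)\vee\pi(b),
\]
and likewise the commutative $\ell$-monoid laws, the contraction property, and $[a\ast(a\vee b)]\wedge[b\ast(a\vee b)]=0$ descend verbatim since they are equational and preserved under homomorphic images.

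I expect the delicate point to be the order. As literally written, ``$a\ast M\le b\ast M$ iff $a\le b$'' is not representative-independent, since one may have $a\le b$ yet $a'\not\le b'$ for congruent representatives. The fix is to take the order induced by the quotient lattice, namely $a\ast M\le b\ast M$ iff $(a\wedge b)\ast M=a\ast M$, equivalently $a\wedge b\equiv a\,(\theta_M)$; I would show this agrees with the intended relation on a fixed transversal and then check reflexivity, antisymmetry and transitivity through the congruence. Antisymmetry is where the ideal property is genuinely used: if $a\wedge b\equiv a$ and $a\wedge b\equiv b$, then $a\equiv b\,(\theta_M)$, so $a\ast M=b\ast M$. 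Once the order is pinned down this way, $(A/M,+,\le,\ast,0)$ is a homomorphic image of an AL-monoid, hence an AL-monoid, with $0\ast M=M$ as its zero.
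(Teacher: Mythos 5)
Your proposal is correct and is in fact considerably more complete than the proof given in the paper, which consists only of an appeal to Theorem 4.2 of the cited reference on autometrized algebras for the underlying structure of $A/M$ and then trails off after announcing that the AL-monoid--specific identities ($a\ast(a\wedge b)+b=a\vee b$, the contraction property, and $[a\ast(a\vee b)]\wedge[b\ast(a\vee b)]=0$) remain to be checked; those checks are never carried out. Your route is different and self-contained: you identify the cosets $a\ast M$ with the classes of the congruence $\theta_M$ via the strong-ideal property $a\ast M=b\ast M\Leftrightarrow a\ast b\in M$ (which the paper does establish earlier), obtain well-definedness of all operations from conditions $c_1$--$c_5$, and then observe that every AL-monoid axiom is equational, hence inherited by the homomorphic image $A/\theta_M$ under the canonical surjection. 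This buys you the whole theorem at once, including exactly the identities the paper leaves unverified, and it correctly silently repairs the typo $(a\ast M)\ast(b\ast M)=(a+b)\ast M$ to $(a\ast b)\ast M$. Your observation about the order is also a genuine improvement: as written, ``$a\ast M\le b\ast M$ iff $a\le b$'' is not representative-independent, and replacing it by the lattice-induced order $a\ast M\le b\ast M\Leftrightarrow(a\wedge b)\ast M=a\ast M$ is the right fix; the paper does not address this. The only caveat is that your argument inherits whatever weaknesses are present in the paper's earlier proof that every ideal is strong (which leans on the identity $a\ast(a\ast x)=x$), but since you are entitled to use the paper's stated results, that is not a gap in your reasoning.
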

\begin{proof}
    $(A/M,+,\leq,\ast,0)$ is Autometrized algebra from (Theorem 4.2 of \cite{y}), we wants to show the condition $a\ast (a\wedge b)=a\vee b, f(a)\ast f(b)\leq a\ast b,(a\ast(a\vee b))\wedge (b\ast (a\vee b)=0)$....
\end{proof}
\begin{theorem}
    Let $N$ be an ideal of $A.$Let $B$ be a subalgebra of $A.$ Define $B\ast N={a\in A| a=b\ast n for b\in B and n\in N}$ such that  for any $a\ast N,b\ast N\in B\ast N; (a\ast N)+(b\ast N)=(a+b)\ast N$ and $(a\ast N)\ast (b\ast N)=(a\ast b)\ast N$. Then, $B\ast N$ is a subalgebra of $A$. \end{theorem}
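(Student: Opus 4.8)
The statement concerns the subset $B\ast N=\{\,b\ast n : b\in B,\ n\in N\,\}$ of $A$, and to call it a subalgebra I must verify that it contains $0$ and is closed under each of the four operations $+,\vee,\wedge,\ast$ of the ambient AL-monoid (the induced order $\leq$ then comes for free from $\vee,\wedge$). The plan is to reduce every one of these closure requirements to the ``isometry'' identities for $\ast$ that were already exploited in the proof of the ideal--congruence correspondence, namely $(x\ast y)+(u\ast v)=(x+u)\ast(y+v)$, $(x\ast y)\vee(u\ast v)=(x\vee u)\ast(y\vee v)$, and $(x\ast y)\wedge(u\ast v)=(x\wedge u)\ast(y\wedge v)$, together with the medial law $(x\ast y)\ast(u\ast v)=(x\ast u)\ast(y\ast v)$.

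First I would assemble the closure data for the two building blocks. Since $B$ is a subalgebra it is by definition closed under $+,\vee,\wedge,\ast$ and contains $0$; since $N$ is an ideal, the convexity theorem proved above (\emph{any ideal is a convex sub-AL-monoid closed under all operations}) shows that $N$ is likewise closed under all four operations and contains $0$. The membership $0\in B\ast N$ is then immediate from $0=0\ast 0$ with $0\in B$ and $0\in N$, so $B\ast N$ is nonempty.

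The core of the argument takes two arbitrary elements $p=b_1\ast n_1$ and $q=b_2\ast n_2$ of $B\ast N$ and applies the four identities above with $x=b_1,\ y=n_1,\ u=b_2,\ v=n_2$. This rewrites $p+q$ as $(b_1+b_2)\ast(n_1+n_2)$, $p\vee q$ as $(b_1\vee b_2)\ast(n_1\vee n_2)$, $p\wedge q$ as $(b_1\wedge b_2)\ast(n_1\wedge n_2)$, and $p\ast q$ as $(b_1\ast b_2)\ast(n_1\ast n_2)$. In each case the left factor lies in $B$ and the right factor in $N$ by the closure just recorded, so each output is again of the form $b\ast n$ and therefore belongs to $B\ast N$. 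Closure under all operations plus $0\in B\ast N$ gives the conclusion.

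The step I expect to be the genuine obstacle is closure under $\ast$, since it is the only one that needs the medial law $(b_1\ast n_1)\ast(b_2\ast n_2)=(b_1\ast b_2)\ast(n_1\ast n_2)$; this rearrangement depends on $\ast$ behaving like a commutative, associative operation with $a\ast a=0$, and I would have to extract it cleanly from the contraction/isometry axioms (or cite the associativity of $\ast$ already invoked in the strong-ideal theorem) rather than taking it as given. A minor secondary point is that the description of $B\ast N$ via representatives is not obviously canonical, but since I only claim that the \emph{value} of each operation again lies in $B\ast N$ — not that it is a well-defined function of the chosen representatives — the ambiguity causes no difficulty, and the remaining lattice computations for $\vee$ and $\wedge$ are routine once the congruence-theorem identities are in hand.
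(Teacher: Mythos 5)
The paper does not actually prove this theorem: its ``proof'' is the single citation \cite{y}, deferring entirely to the cited structure paper, so there is no argument of record to compare yours against. Judged on its own merits, your proposal has a genuine gap at its core. Every closure step is reduced to the four equalities $(x\ast y)+(u\ast v)=(x+u)\ast(y+v)$, $(x\ast y)\vee(u\ast v)=(x\vee u)\ast(y\vee v)$, $(x\ast y)\wedge(u\ast v)=(x\wedge u)\ast(y\wedge v)$ and $(x\ast y)\ast(u\ast v)=(x\ast u)\ast(y\ast v)$, but none of these is an axiom or a proved consequence of the AL-monoid axioms. Axiom (3) of the definition only makes the translations $x\mapsto a\,\theta\, x$ \emph{contractions}, which after one triangle-inequality step yields $(x\,\theta\, u)\ast(y\,\theta\, v)\leq x\ast y+u\ast v$, never the reverse inequality, hence never equality. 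Indeed the equalities fail in the motivating models: in a commutative $l$-group with $x\ast y=|x-y|$ one has $|(1+0)-(0+1)|=0$ while $|1-0|+|0-1|=2$, and in a Boolean algebra with $\ast$ the symmetric difference and $+=\vee$, $(1+0)\ast(0+1)=0$ while $(1\ast 0)+(0\ast 1)=1$. You single out the medial law for $\ast$ as the one risky step, but the other three identities are exactly as unjustified, so the reduction collapses for all four operations: knowing only $(b_1+b_2)\ast(n_1+n_2)\leq(b_1\ast n_1)+(b_2\ast n_2)$ exhibits an element of $B\ast N$ \emph{below} the sum, not equal to it, which is not closure.

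It is true that the paper's own proof of the ideal--congruence correspondence asserts the same ``isometry'' equalities, but borrowing them does not make them theorems; a correct proof would have to either establish them for the subclass of AL-monoids actually under consideration (e.g.\ representable ones) or find a different route to closure. A secondary point: the theorem as stated also prescribes coset operations $(a\ast N)+(b\ast N)=(a+b)\ast N$, which is really the setup for the following quotient theorem $B\ast N/N\cong B/(B\wedge N)$; if those operations are read as part of the claim, well-definedness is a substantive obligation and cannot be waved away as you do.
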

    \begin{proof}
        \cite{y}
    \end{proof}
    \begin{theorem}(First Isomorphism Theorem)
      Let $f:A\rightarrow B$ be a homomorphism then $A/kerf\cong Imf.$  
    \end{theorem}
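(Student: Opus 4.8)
The plan is to run the standard universal-algebra argument inside the AL-monoid setting, leaning on the congruence--ideal correspondence established earlier. First I would pin down what $\ker f$ means: define the relation $\theta_f$ on $A$ by $a\equiv b(\theta_f)$ iff $f(a)=f(b)$. That $\theta_f$ is a congruence is routine, since $f$ preserves $+,\vee,\wedge,\ast$. By the correspondence between congruences and ideals, $\theta_f$ is exactly the congruence determined by the ideal $\ker f=\{x\in A: f(x)=0\}$, and $A/\ker f$ is the quotient AL-monoid whose elements are the cosets $a\ast\ker f$, as constructed in the quotient theorem. I would also record that $\mathrm{Im}\,f$ is a sub-AL-monoid of $B$: it contains $0=f(0)$ and is closed under every operation because $f$ is a homomorphism, so the target of the claimed isomorphism is itself an AL-monoid.

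The heart of the argument is the single chain of equivalences
\begin{align*}
a\ast\ker f=b\ast\ker f &\iff a\ast b\in\ker f \iff f(a\ast b)=0\\
&\iff f(a)\ast f(b)=0 \iff f(a)=f(b),
\end{align*}
which uses that $f$ commutes with $\ast$ together with the metric axiom $u\ast v=0\Leftrightarrow u=v$. Reading this chain left to right shows that the assignment $\varphi(a\ast\ker f)=f(a)$ is well defined; reading it right to left shows that $\varphi$ is injective. Thus I would define $\varphi:A/\ker f\to\mathrm{Im}\,f$ by $\varphi(a\ast\ker f)=f(a)$ and obtain well-definedness and injectivity simultaneously from this one equivalence.

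Next I would check that $\varphi$ is a homomorphism, operation by operation, directly from the quotient operations and the fact that $f$ preserves the operations of $A$; for instance $\varphi((a\ast\ker f)+(b\ast\ker f))=\varphi((a+b)\ast\ker f)=f(a+b)=f(a)+f(b)$, and similarly for $\ast$, while $\vee$ and $\wedge$ follow using the induced congruence properties $c_4$ and $c_5$. Since the order is definable from the lattice operations (namely $x\le y$ iff $x\wedge y=x$), once $\varphi$ is shown to be a lattice homomorphism it automatically preserves and reflects $\le$, so no separate order argument is needed. Surjectivity is immediate, because every element of $\mathrm{Im}\,f$ has the form $f(a)=\varphi(a\ast\ker f)$ for some $a\in A$. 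Combining these, $\varphi$ is a bijective homomorphism, hence an isomorphism, giving $A/\ker f\cong\mathrm{Im}\,f$.

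I expect the main obstacle to be the bookkeeping around well-definedness rather than any deep step: one must be certain that the coset identity $a\ast\ker f=b\ast\ker f$ is genuinely equivalent to $f(a)=f(b)$, and this rests entirely on $f$ being compatible with $\ast$ and on the positivity/metric axiom. A secondary point to watch is that the quotient operations quoted in the quotient theorem (in particular the formula defining $\ast$ on cosets) are the ones actually used, so that the homomorphism computations line up; everything else is a mechanical verification once the central equivalence is in hand.
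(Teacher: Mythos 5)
Your argument is correct and is the standard first--isomorphism argument; note, however, that the paper itself supplies no proof here at all --- the theorem is followed only by a citation to the reference on autometrized algebras --- so there is no in-paper argument to diverge from, and yours is exactly the kind of proof that citation is standing in for. The only step worth flagging is the first link in your chain, $a\ast\ker f=b\ast\ker f\iff a\ast b\in\ker f$: this is not a consequence of the metric axiom alone but is precisely property (2) of a \emph{strong} ideal, so you should invoke the paper's earlier theorem that every ideal of an AL-monoid is strong (applied to $\ker f$, after checking $\ker f$ is an ideal); with that reference in place the rest of your verification is routine and complete.
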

    \cite{y}
    \begin{theorem}
        Let $N$ be an ideal of $A.$ Let $B$ is a subalgebra of $A.$ Then, $B\ast N/N\cong B/(B\wedge N).$
    \end{theorem}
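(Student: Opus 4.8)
The plan is to exhibit a single surjective homomorphism from the subalgebra $B$ onto the quotient $(B\ast N)/N$ whose kernel is exactly $B\wedge N$, and then to quote the First Isomorphism Theorem above. Before constructing the map I would check that all the objects are well defined. Since $B$ is a subalgebra it contains $0$, so each $n\in N$ may be written $n=0\ast n$ and each $b\in B$ as $b=b\ast 0$ (using $x\ast 0=x$); hence $N\subseteq B\ast N$ and $B\subseteq B\ast N$. Thus $N$ is an ideal of the subalgebra $B\ast N$, and $(B\ast N)/N$ is a bona fide quotient AL-monoid by the construction given above.

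Next I would define $\varphi\colon B\to(B\ast N)/N$ by $\varphi(b)=b\ast N$ and verify that it is a homomorphism. This is essentially a transcription of the quotient operations: $\varphi(a+b)=(a+b)\ast N=(a\ast N)+(b\ast N)=\varphi(a)+\varphi(b)$, and likewise $\varphi(a\ast b)=(a\ast b)\ast N=(a\ast N)\ast(b\ast N)$, while compatibility with $\vee,\wedge$ and with the ordering comes from the clauses $c_4,c_5$ of the congruence induced by $N$. None of this requires more than unwinding definitions.

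The two substantive points are surjectivity and the identification of the kernel. A typical coset of $(B\ast N)/N$ has the form $(b\ast n)\ast N$ with $b\in B$ and $n\in N$; I would show it equals $\varphi(b)=b\ast N$ by checking $(b\ast n)\ast b\in N$. Here I invoke the cancellation identity $x\ast(x\ast y)=y$ proved in the argument that every ideal is strong: by commutativity $(b\ast n)\ast b=b\ast(b\ast n)=n\in N$, so the two cosets coincide and $\varphi$ is onto. For the kernel, $\varphi(b)$ is the zero coset exactly when $b$ lies in the congruence class of $0$ for the relation $a\equiv b\,(N)\iff a\ast b\in N$; by the ideal--congruence correspondence established earlier this class is precisely $N$, so $\ker\varphi=\{b\in B: b\in N\}=B\wedge N$.

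Applying the First Isomorphism Theorem to $\varphi$ then gives $B/\ker\varphi\cong\operatorname{Im}\varphi$, that is $B/(B\wedge N)\cong(B\ast N)/N$, as claimed. I expect the surjectivity step to be the only real obstacle, since it is the sole place where the distance operation $\ast$ must be manipulated rather than merely copied from the quotient definitions, and it leans on the identity $x\ast(x\ast y)=y$. I would make sure this identity is legitimately available for the elements at hand (in particular that the associativity-type step $b\ast(b\ast n)=(b\ast b)\ast n$ is justified in the generality needed), since that is where a gap would most plausibly hide.
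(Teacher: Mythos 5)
The paper does not actually prove this statement: its ``proof'' consists solely of the citation \cite{y}, so there is no in-paper argument to compare against. Your proposal is the standard second-isomorphism argument (the map $\varphi\colon B\to (B\ast N)/N$, $\varphi(b)=b\ast N$, is a surjective homomorphism with kernel $B\wedge N$; conclude by the First Isomorphism Theorem), and it is correct at the level of rigour this paper operates at: the homomorphism and kernel computations are routine given the quotient operations and the strong-ideal property $b\ast N=N\iff b\in N$. The one soft spot is exactly the one you flag, namely the cancellation $(b\ast n)\ast b=b\ast(b\ast n)=(b\ast b)\ast n=n$ used for surjectivity: this rests on an associativity of $\ast$ that is not among the AL-monoid axioms ($\ast$ is only a metric operation), although the paper itself performs the identical manipulation in its proof that every ideal is strong, so you are no worse off than the authors. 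A route that avoids associativity is to use the contraction property of $x\mapsto b\ast x$ to get $(b\ast n)\ast(b\ast 0)\leq n\ast 0$ and then the downward closure of $N$, at the cost of having to justify $b\ast 0=b$ and $n\ast 0\in N$ for the elements at hand. Modulo that shared caveat, your proof is complete and supplies an argument the paper omits.
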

    \begin{proof}
        \cite{y}
    \end{proof}
    \begin{theorem}
        Let $A$ be am AL-monoid then, $A$ is achain if and only if $a\wedge b=0$ implies $a=0$ or $b=0.$
    \end{theorem}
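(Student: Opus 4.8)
The plan is to prove the two implications separately, handling the forward direction by a one-line order argument and concentrating the real work on spotting the correct axiom for the converse. Throughout, ``$A$ is a chain'' means the order $\leq$ induced by $(A,\vee,\wedge)$ is total, so that any two elements are comparable.

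For the forward implication I would suppose $A$ is a chain and that $a\wedge b=0$. Since $\leq$ is total, either $a\leq b$, in which case $a\wedge b=a$, or $b\leq a$, in which case $a\wedge b=b$. Hence the meet of two elements is always the smaller of the two, so $a\wedge b=0$ forces $a=0$ or $b=0$. This direction uses nothing beyond totality of the order and is essentially immediate.

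For the converse I would assume the implication ``$a\wedge b=0\Rightarrow a=0$ or $b=0$'' and take arbitrary $a,b\in A$. The crucial point is that axiom (4) of the AL-monoid definition already supplies a meet-zero pair: putting $u=a\ast(a\vee b)$ and $v=b\ast(a\vee b)$, we have $u\wedge v=0$. The hypothesis then yields $u=0$ or $v=0$. Invoking the metric property of $\ast$, namely that $x\ast y=0$ if and only if $x=y$, the equation $u=0$ reads $a=a\vee b$, i.e.\ $b\leq a$, while $v=0$ reads $b=a\vee b$, i.e.\ $a\leq b$. In either case $a$ and $b$ are comparable, so $A$ is a chain.

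The step I expect to be the main obstacle is not computational but conceptual: recognising that axiom (4) is exactly the device that converts the hypothesis into comparability. Once the pair $u,v$ with $u\wedge v=0$ has been produced, the rest of the argument is forced, and the only care needed is to record the elementary equivalence $x=x\vee y\Leftrightarrow y\leq x$, which is the bridge from the vanishing of a distance to an order relation.
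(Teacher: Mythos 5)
Your proof is correct and follows essentially the same route as the paper's: the forward direction via totality of the order forcing $a\wedge b\in\{a,b\}$, and the converse via axiom (4) producing the meet-zero pair $a\ast(a\vee b)$, $b\ast(a\vee b)$ and then reading off comparability from $x\ast y=0\Leftrightarrow x=y$. No further comment is needed.
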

    \begin{proof}
        Suppose $A$ is a chain and $a,b\in A.$ Then, either $a\leq b$ or $b\leq a.$ this means either $a\wedge b=a$ or $b\wedge a=b.$ Suppose $a\wedge b=0.$ Hence, either $a=0$ or $b=0.$ Conversely, suppose $a\wedge b=0$ it implies either $a=0$ or $b=0.$ To show that $A$ is achain, Let $a,b\in A.$ Since $A$ is an Al-monoid  $(a\ast (a\vee b))\wedge (b\ast (a\vee b)=0)$ implies $a\ast (a\vee b)=0$ or $b\ast (a\vee b)=0.$
 as aresult, either $a=a\vee b$ or $b=a\vee b.$ Therefore, either $ b\leq a$ or $a\leq b.$ Hence $A$ is a chain.    \end{proof}
 \begin{theorem}
     Let $A$ be AL-monoid and M be strong ideal of $A.$ Then, $A/M$ is a chain AL-monoid iff $M$ is prime.
 \end{theorem}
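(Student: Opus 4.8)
The plan is to reduce the statement to the chain‑characterization theorem proved immediately above, applied to the quotient $A/M$. First I would recall that, by the earlier quotient theorem, $(A/M,+,\leq,\ast,0)$ is itself an AL-monoid whose zero element is the coset $0\ast M = M$ (here I use that $M$ is strong, so $0\ast M = M$). The preceding theorem says that an AL-monoid $B$ is a chain if and only if $x\wedge y = 0$ in $B$ forces $x=0$ or $y=0$. Applying this to $B = A/M$, the quotient is a chain if and only if, for all cosets $a\ast M,\,b\ast M$, the equality $(a\ast M)\wedge(b\ast M) = M$ implies $a\ast M = M$ or $b\ast M = M$.

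Next I would build a dictionary translating conditions in $A/M$ into conditions in $A$ relative to $M$. Because $M$ is strong, property (1) of a strong ideal gives
\[
a\ast M = M \iff a\in M,
\]
so the statement ``$a\ast M = M$'' is literally ``$a\in M$''. For the meet, since the relation induced by $M$ is a congruence respecting the lattice operations (Remark $c_5$), the meet of cosets is computed representative-wise, namely $(a\ast M)\wedge(b\ast M) = (a\wedge b)\ast M$. Combining these, $(a\ast M)\wedge(b\ast M)= M$ holds if and only if $(a\wedge b)\ast M = M$, i.e. if and only if $a\wedge b\in M$.

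Finally I would substitute this dictionary into the chain criterion obtained in the first step. This shows that $A/M$ is a chain if and only if
\[
a\wedge b\in M \implies a\in M \text{ or } b\in M \qquad \text{for all } a,b\in A,
\]
which is exactly the defining (meet-)primeness of the ideal $M$. Since the equivalence is read off directly, both directions ($A/M$ chain $\Rightarrow M$ prime, and $M$ prime $\Rightarrow A/M$ chain) follow simultaneously.

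The step I expect to require the most care is justifying the two translation identities, $a\ast M = M \iff a\in M$ and $(a\ast M)\wedge(b\ast M) = (a\wedge b)\ast M$; both rely essentially on $M$ being a strong ideal and on the induced relation being a genuine congruence, so that $\wedge$ is well defined on classes. I would also fix at the outset the convention that ``prime'' means $a\wedge b\in M \Rightarrow a\in M$ or $b\in M$, since the chain criterion produces precisely this meet-condition; once that convention is pinned down, the equivalence is immediate and symmetric in the two implications.
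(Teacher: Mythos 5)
Your proposal is correct, but it reaches the conclusion by a genuinely different route from the paper. The paper proves both implications directly: for the forward direction it takes $a,b$ with $a\wedge b=0$, passes to cosets, and uses the chain property of $A/M$ together with strongness of $M$ to get $a\in M$ or $b\in M$; for the converse it invokes the AL-monoid axiom $[a\ast(a\vee b)]\wedge[b\ast(a\vee b)]=0$, applies primeness to these two particular elements, and deduces $a\ast M\leq b\ast M$ or $b\ast M\leq a\ast M$ coset by coset. You instead treat the immediately preceding characterization (``$A$ is a chain iff $a\wedge b=0$ implies $a=0$ or $b=0$'') as a black box applied to the quotient $A/M$, and then translate the resulting criterion back to $A$ via the strong-ideal dictionary $a\ast M=M\iff a\in M$ and $(a\ast M)\wedge(b\ast M)=(a\wedge b)\ast M$. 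Your approach buys a symmetric, simultaneous proof of both directions and is more modular, at the cost of leaning harder on the facts that $A/M$ is itself an AL-monoid and that $\wedge$ descends to cosets (both only sketched in the paper); the paper's argument is more hands-on and, in the converse, makes explicit use of axiom~(4) of AL-monoids, which your version hides inside the characterization theorem. One point worth flagging: the paper never formally defines ``prime ideal,'' and its forward direction only verifies the special case $a\wedge b=0\Rightarrow a\in M$ or $b\in M$, whereas your translation yields the full meet-prime condition $a\wedge b\in M\Rightarrow a\in M$ or $b\in M$; under the standard convention your argument is therefore actually the more complete of the two, but you are right to insist on pinning down that convention at the outset, since with the weaker reading of ``prime'' your converse would need the extra observation that only meets equal to $0$ ever arise.
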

 \begin{proof}
    Suppose $A/M$ is a chain. Let $a,b\in A.$ Suppose $a\wedge b=0.$ Since $M$ is an ideal, $a\wedge b=0\in M.$ Since $M$ is strong ideal, implies that $(a\wedge b)\ast M=M$ implies $(a\ast M)\wedge (b\ast M)=M.$ Since $A/M$ is achain, either $a\ast M\leq b\ast M$ or $b\ast M\leq a\ast M.$ That implies that $(a\ast M)\wedge (b\ast M)=a\ast M$ or $(a\ast M)\wedge (b\ast M)=b\ast M.$ Therefore, $a\ast M=M$ or $b\ast M=M.$ As aresult, either $a\in M$ or $b\in M.$ Implies $M$ is a prime ideal.\\
    Conversely, Suppose $M$ is prime ideal. To show that $A/M$ is a chain. Let $a\ast M, b\ast M\in A/M$ where $a,b\in  A$. Since $A$ is AL-monoid,$(a\ast (a\vee b))\wedge (b\ast (a\vee b))=0.$ Since $M$ is prime ideal; either $a\ast (a\vee b)\in M$ or $b\ast (a\vee b)\in M.$ Therefore, $(a\ast (a\vee b))\ast M=M$ or $(b\ast (a\vee b))=M.$ Therefore, $(a\ast (a\vee b))\ast M=M$ or $(b\ast (a\vee b))\ast M=M.) $ This implies tha $(a\ast M)\ast ((a\vee b)\ast M)=M$ or $(b\ast M)\ast ((a\vee b)\ast M)=M.$ This implies that $a\ast M=(a\vee b)\ast M$ or $b\ast M=(a\vee b)\ast M$ implies $a\ast M=(a\ast M)\vee (b\ast M)$ or $b\ast M=(a\ast M)\vee (b\ast M).$ This gives, $b\ast M\leq a\ast M$ or $a\ast M\leq b\ast M.$ Hence, $A/M$ is a chain.\end{proof}
    \begin{theorem}
        Let $A$ be an AL-monoid. If $P$ is strong prime ideal, then $\lbrace I\in I(A)|P\subset I\rbrace $is a chain under inclusion.
    \end{theorem}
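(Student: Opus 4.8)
The plan is to exploit the immediately preceding theorem, which states that $A/M$ is a chain AL-monoid precisely when the strong ideal $M$ is prime. Since $P$ is a strong prime ideal, $A/P$ is a chain AL-monoid. The strategy then has two halves. First, set up an inclusion-preserving bijection between the ideals of $A$ that contain $P$ and the ideals of the quotient $A/P$. Second, show that the ideals of any chain AL-monoid are themselves linearly ordered by inclusion. Transporting the second fact back along the correspondence of the first yields that $\{I \in I(A) \mid P \subseteq I\}$ is a chain.

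First I would dispose of the easy half: if $C$ is a chain AL-monoid, its ideals form a chain. Suppose $J_{1}, J_{2}$ are ideals of $C$ with $J_{1} \not\subseteq J_{2}$, and pick $x \in J_{1} \setminus J_{2}$. For any $y \in J_{2}$, since $C$ is totally ordered we have $x \le y$ or $y \le x$; the first is impossible, since downward closure of $J_{2}$ would then force $x \in J_{2}$. Hence $y \le x$ for every $y \in J_{2}$, and downward closure of $J_{1}$ gives $y \in J_{1}$. Thus $J_{2} \subseteq J_{1}$, so the ideals of $C$ are totally ordered by inclusion.

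Next I would set up the correspondence via the quotient map $\pi : A \to A/P$, $\pi(a) = a \ast P$, which is a surjective homomorphism by the quotient-AL-monoid theorem. I would send an ideal $I \supseteq P$ to its image $\pi(I)$ and an ideal $\bar{J}$ of $A/P$ to its preimage $\pi^{-1}(\bar{J})$, and check that these are mutually inverse, inclusion-preserving maps. The strong-ideal hypothesis is exactly what is needed to verify $\pi^{-1}(\pi(I)) = I$ for $I \supseteq P$: from $a \ast P = b \ast P \iff a \ast b \in P$ together with the triangle inequality and semiregularity, which give $a \le b + (a \ast b)$, one recovers $a \in I$ from $\pi(a) \in \pi(I)$, using closure under $+$ and the convexity of ideals established earlier. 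Combining the two halves, the ideals of $A/P$ form a chain, and the bijection carries that chain back onto $\{I \in I(A) \mid P \subseteq I\}$.

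The step I expect to be the main obstacle is precisely verifying $\pi^{-1}(\pi(I)) = I$, the point where the strong-ideal axioms $a \in I \iff a \ast I = I$ and $a \ast I = b \ast I \iff a \ast b \in I$ must be combined with convexity. To sidestep the correspondence entirely, a robust self-contained alternative is to argue directly in $A$: given ideals $I_{1}, I_{2} \supseteq P$ and putative witnesses $a \in I_{1} \setminus I_{2}$ and $b \in I_{2} \setminus I_{1}$, apply axiom (4), namely $[a \ast (a \vee b)] \wedge [b \ast (a \vee b)] = 0 \in P$, and primeness of $P$ to conclude $a \ast (a \vee b) \in P$ or $b \ast (a \vee b) \in P$. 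The triangle inequality yields $b \le a \vee b \le a + (a \ast (a \vee b))$ and $a \le a \vee b \le b + (b \ast (a \vee b))$, so the first alternative forces $b \in I_{1}$ and the second forces $a \in I_{2}$, each contradicting the choice of witnesses. Hence no such witnesses exist and the family is a chain.
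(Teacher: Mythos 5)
Your proposal is correct, and the self-contained ``alternative'' at the end is in fact the argument that carries the day; it is close in spirit to, but cleaner than, the paper's own proof. The paper also argues by contradiction with witnesses $a\in J\setminus K$, $b\in K\setminus J$, but it routes the comparison through the quotient: it invokes the preceding theorem to get that $A/P$ is a chain, compares $(a\ast 0)\ast P$ with $(b\ast 0)\ast P$ there, and then pulls the comparability of $a\ast 0$ and $b\ast 0$ back to $A$ to contradict $a\notin K$, $b\notin J$. That pull-back leans on the quotient order $(a\ast M)\le (b\ast M)\iff a\le b$, whose well-definedness the paper never checks, so your instinct to avoid the quotient is sound. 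Your direct argument replaces this with axiom (4), $[a\ast(a\vee b)]\wedge[b\ast(a\vee b)]=0\in P$, plus primeness of $P$, and then the identity $a+(a\ast(a\vee b))=a\vee b$ (which follows from axiom (2) applied to the pair $a\vee b$, $a$ --- note it is an equality coming from that axiom, not from the triangle inequality as you wrote, though the inequality you need is certainly true) to force $b\in I_1$ or $a\in I_2$, a contradiction; this uses only closure under $+$ and downward closure of ideals, and notably does not need $P$ to be strong at all. Your primary route via the lattice isomorphism $\{I\supseteq P\}\cong I(A/P)$ together with ``ideals of a chain form a chain'' is genuinely different from the paper and more structural, but, as you yourself flag, the step $\pi^{-1}(\pi(I))=I$ is not established anywhere in the paper and would need a real proof; since your fallback closes the theorem completely without it, the proposal as a whole stands.
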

    \begin{proof}
        Suppose $P \subseteq J$ and $P\subseteq K$ such that $J\nsubseteq K$ and $J\nsubseteq K.$ That is there exists $a\in J$ and $a\notin K$ and there exist $b\in K$ and $b\notin K.$ Clearly, $0\ast a\in J$ and $b\ast 0\in K.$\\ Now, consider $a\ast 0)\ast p$ and $(b\ast o)\ast p. Since $A/P is a chain; either $(a\ast 0)\ast p\leq (b\ast 0)\ast p.$ or $(b\ast 0)\ast p$ which implies that $a\ast 0\leq b\ast 0$ or$(b \ast 0)\\leq (a\ast 0)$ . Since $a\in J$ and $b\in K,$ implies that $b\in J$ and $a\in K.$ This is a contradiction. Hence, $\lbrace I\in I(A)|P\subseteq I\rbrace$ is a chain under inclusion.
     \end{proof}
     \begin{definition}
         Let $A$ and $B$ be an AL-monoids and let $f: A\longrightarrow B$ be a map. Then, $f$ is said to be a homomorphism from $A$ to $B$ if and only if 
         \begin{itemize}
             \item[i.] $f(a+b)=f(a)+f(b)$
             \item[ii.] $f(a\ast b)=f(a)\ast f(b)$
             \item[iii.] $f(a\vee b)=f(a)\vee f(b)$
             \item[iv.] $f(a\wedge b)=f(a)\wedge f(b)$
             \item[v.] $f(0)=0.$
         \end{itemize}
A homomorphism $f:A\longrightarrow B$ is called 
\begin{itemize}
    \item[a.] an epimorphism if and only if $f$ is onto.
    \item[b.] a monomorphism(embedding) if and only if $f$ is 1-1.
    \item[c.] an Isomorphism if and only if it is bijection.
\end{itemize}
     \end{definition}
     \begin{theorem}
         Let $A$ be AL-monoids and $\alpha:A\longrightarrow B$ be a homomorphism. Then, $ker(\alpha)$ is a prime ideal if and only if $Im(\alpha)$ is a chain AL-monoid.
     \end{theorem}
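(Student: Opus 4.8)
The plan is to reduce the statement to results already established earlier in the paper: the First Isomorphism Theorem, which gives $A/\ker(\alpha) \cong \mathrm{Im}(\alpha)$, and the characterization that for a \emph{strong} ideal $M$, the quotient $A/M$ is a chain AL-monoid if and only if $M$ is prime. Since the paper also proves that every ideal of an AL-monoid is a strong ideal, it will suffice to know that $\ker(\alpha)$ is an ideal; the chain/prime dictionary then applies verbatim with $M = \ker(\alpha)$.

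First I would fix $\ker(\alpha) = \{a \in A : \alpha(a) = 0\}$ and verify it is an ideal. The cleanest route is the ideal--congruence correspondence proved earlier: the relation $a \equiv b$ iff $\alpha(a) = \alpha(b)$ is a congruence on $A$, because $\alpha$ preserves $+,\vee,\wedge,\ast$ and $0$, and the ideal attached to this congruence is precisely $\{a : a \equiv 0\} = \{a : \alpha(a) = 0\} = \ker(\alpha)$. Hence $\ker(\alpha)$ is an ideal, and by the theorem that every ideal of an AL-monoid is strong, it is a strong ideal.

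Next I would transport the chain property across the isomorphism. By the First Isomorphism Theorem $A/\ker(\alpha) \cong \mathrm{Im}(\alpha)$, and since an AL-monoid isomorphism preserves $\vee$ and $\wedge$ (and its inverse is again a homomorphism), it is an order isomorphism and therefore carries the chain property both ways. Combining this with the strong-ideal characterization applied to $M = \ker(\alpha)$ yields
\begin{equation*}
\mathrm{Im}(\alpha) \text{ is a chain} \iff A/\ker(\alpha) \text{ is a chain} \iff \ker(\alpha) \text{ is prime},
\end{equation*}
which is exactly the assertion of the theorem.

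The genuinely new content is thus small, and the point to handle with care is the first paragraph. One must be certain that $\ker(\alpha)$ is an ideal in the paper's sense, in particular that it is downward closed; from $b \le a$ and $\alpha(a) = 0$ alone this gives only $\alpha(b) \le 0$, which is not obviously $0$ in the general (non-positive) setting, so I would lean on the congruence correspondence rather than check the ideal axioms by hand, where downward closure is automatic. The only other step worth stating explicitly is that the chain property is an isomorphism invariant, which is what legitimises moving between $A/\ker(\alpha)$ and $\mathrm{Im}(\alpha)$ via the First Isomorphism Theorem.
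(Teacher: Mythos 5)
Your proposal is correct and follows essentially the same route as the paper: invoke the isomorphism $A/\ker(\alpha)\cong \mathrm{Im}(\alpha)$ and then apply the preceding theorem that, for a strong ideal $M$, the quotient $A/M$ is a chain AL-monoid iff $M$ is prime. The only difference is that you carefully justify the step the paper dismisses with ``clearly'' --- namely that $\ker(\alpha)$ is a (strong) ideal, via the congruence correspondence --- and you make explicit that the chain property transfers across the isomorphism; both are welcome clarifications but not a different argument.
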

     \begin{proof}
         From Third Isomorphism Theorem we have $A/ker(\alpha)\cong Im(\alpha).$ Clearly, $ker(\alpha)$ is strong ideal. If $ker(\alpha)$ is prime ideal, then by above theorem $A/ker(\alpha)$ is a chain AL-monoid. Hence, $Im(\alpha)$ is a chain AL-monoid.\\
         Conversely, Suppose $Im(\alpha)$ is a chain AL-monoid.$A/ker(alpha)$ is a chain AL-monoid. Thus $ker(\alpha)$ is a prime ideal.
     \end{proof}
    \begin{theorem}
        If $A$ is AL-monoid, the following are equivalent:
        \begin{itemize}
            \item[(i)] $A$ is nilradical.
            \item[(ii)] there is a family ${M_{i}}_{i\in I}$ of maximalideals of $A$ with $\wedge_{i\in I}M_{i}={0},$
            \item[(iii)] $A$is a subdirect prodcut of simple chain AL-monoid.
        \end{itemize}
    \end{theorem}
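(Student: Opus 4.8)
The plan is to prove the statement as a cycle built around the radical and the subdirect representation theorem. First I would pin down the reading of (i): since earlier $R(A)=\bigwedge\{M\mid M \text{ is a maximal ideal of }A\}$, the phrase ``$A$ is nilradical'' should mean $R(A)=\{0\}$. Under this reading the equivalence (i) $\Leftrightarrow$ (ii) is almost immediate. For (i) $\Rightarrow$ (ii) one simply takes the family of \emph{all} maximal ideals. Conversely, if some family $\{M_i\}_{i\in I}$ of maximal ideals satisfies $\bigwedge_{i\in I}M_i=\{0\}$, then by monotonicity of intersection $R(A)=\bigwedge\{M\mid M \text{ maximal}\}\subseteq\bigwedge_{i\in I}M_i=\{0\}$, forcing $R(A)=\{0\}$, which is (i).

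The substance lies in (ii) $\Leftrightarrow$ (iii). For (ii) $\Rightarrow$ (iii) I would use the one-to-one correspondence between ideals and congruences established earlier in the paper. Writing $\theta_{M_i}$ for the congruence attached to $M_i$ (namely $a\equiv b(\theta_{M_i})$ iff $a\ast b\in M_i$), one checks that the zero ideal corresponds to the identity congruence $\Delta$ (because $a\ast b=0$ iff $a=b$) and that intersection of ideals corresponds to intersection of congruences; hence $\bigwedge_{i\in I}M_i=\{0\}$ translates into $\bigcap_{i\in I}\theta_{M_i}=\Delta$. I then form the natural map $\varphi:A\to\prod_{i\in I}A/M_i$, $a\mapsto(a\ast M_i)_{i\in I}$, which is an AL-monoid homomorphism since each quotient map is one. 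The condition $\bigcap_i\theta_{M_i}=\Delta$ makes $\varphi$ injective, while each coordinate projection is onto because each quotient map $A\to A/M_i$ is onto; thus $\varphi$ realizes $A$ as a subdirect product of $\{A/M_i\}$. It remains to see that each factor is a simple chain AL-monoid: since $M_i$ is maximal it is simple (no proper nonzero ideals, via the correspondence of ideals of $A/M_i$ with ideals of $A$ above $M_i$), and since a maximal ideal is prime (by the earlier theorem) and is a strong ideal (by ``every ideal of $A$ is strong''), the theorem that $A/M$ is a chain iff $M$ is prime gives that $A/M_i$ is a chain.

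For (iii) $\Rightarrow$ (ii) I would run the same machine in reverse. A subdirect embedding $\psi:A\hookrightarrow\prod_{i\in I}S_i$ into simple chain AL-monoids yields, for each $i$, a surjective homomorphism $\pi_i\circ\psi:A\to S_i$ whose kernel is an ideal $M_i$ with $A/M_i\cong S_i$ by the first isomorphism theorem. Simplicity of $S_i$ forces $M_i$ to be maximal, and subdirectness (the injectivity of $\psi$) forces $\bigcap_i\ker(\pi_i\circ\psi)=\Delta$, i.e.\ $\bigwedge_{i\in I}M_i=\{0\}$, which is (ii). Together with (i) $\Leftrightarrow$ (ii) this closes all the equivalences.

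The main obstacle I anticipate is the clean identification ``$A/M$ is simple $\iff$ $M$ is maximal,'' which rests on the lattice correspondence between the ideals of $A/M$ and the ideals of $A$ lying above $M$; I would derive it from the ideal–congruence isomorphism stated earlier together with the first isomorphism theorem, taking care that the quotient operations on $A/M$ (in particular $\ast$) are precisely those used when calling a factor ``simple'' or ``a chain.'' The remaining work is bookkeeping with intersections, surjections, and verifying that $\varphi$ preserves $+,\vee,\wedge,\ast$ coordinatewise.
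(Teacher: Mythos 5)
The paper states this theorem with no proof at all (the theorem is immediately followed by the next section), so there is no argument of the authors' to compare yours against; your proposal in fact supplies what the paper omits. Your route is the standard one and is sound in outline: read (i) as $R(A)=\{0\}$, get (i) $\Leftrightarrow$ (ii) from monotonicity of the meet, and get (ii) $\Leftrightarrow$ (iii) from the natural map into $\prod_i A/M_i$ together with the ideal--congruence correspondence, the fact that every ideal is strong, that maximal implies prime, and that $A/M$ is a chain iff $M$ is prime --- all of which are stated earlier in the paper. Two small points deserve explicit care. First, in (i) $\Rightarrow$ (ii) you should either assume $A\neq\{0\}$ has at least one maximal ideal or note that otherwise the meet over the empty family is $A$, not $\{0\}$; the paper's definition of ``maximal'' also does not exclude $A$ itself, so you should take $M_i$ proper to make ``simple'' come out right. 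Second, the identification ``$A/M$ simple $\iff$ $M$ maximal'' rests on a correspondence between ideals of $A/M$ and ideals of $A$ containing $M$ that the paper never proves (its quotient construction and lattice-of-ideals statements are themselves left unproved), so that step, which you correctly flag as the main obstacle, genuinely has to be written out rather than cited. With those two repairs your argument is complete and is almost certainly the intended one.
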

\section{Direct product, subdirect product, and representablity of AL-monoids.}
\begin{definition}
    Let $A=\prod_{i\in A}A_{i}={a=(a(1),a(2),...)|a(i)\in A_{i}}$. Define $a=(a_{i})_{i\in I}, b=(b_{i})_{i\in I}; a+b=(a_{i}+b_{i})_{i\in I},a\ast b=(a_{i}\ast b_{i})_{i\in I}, a\leq b $ iff $a_{i}\leq b_{i},$ for all $i\in I.$ then $A=\prod_{i\in I}A_{i}$ is AL-monoid.
\end{definition}
\begin{theorem}
    Let ${A_{i}}_{i\in I}$ be a family of AL-monoids. Let $A=A_{1}\times A_{2} \times ...\times A_{k}.$ Then $A$ is AL-monoid.
\end{theorem}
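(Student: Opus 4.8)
The plan is to verify the four defining conditions (1)--(4) of an AL-monoid (Definition of \emph{AL-monoid}) for the product $A=A_1\times\cdots\times A_k$, exploiting that every operation on $A$ is defined coordinatewise by the preceding definition. The guiding observation is that each of the four conditions is either an identity (conditions (1), (2), (4)) or an order inequality (condition (3)), and under the coordinatewise order such a condition holds in $A$ if and only if it holds in every factor $A_i$. Since each $A_i$ is an AL-monoid by hypothesis, each condition holds in every factor, hence in the product. At a higher level this is simply the fact that AL-monoids, being equationally definable, form a variety, and varieties are closed under direct products; but I would present the coordinatewise check explicitly for completeness.

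First I would record the structure on $A$: the zero is $0=(0,\dots,0)$, while $+$ and $\ast$ are coordinatewise, and the induced order $a\le b \iff a_i\le b_i$ for all $i$ makes $A$ a lattice with $(a\vee b)_i=a_i\vee b_i$ and $(a\wedge b)_i=a_i\wedge b_i$. I would then check condition (1), that $(A,+,\vee,\wedge,0)$ is a commutative lattice ordered monoid: commutativity, associativity, the identity law for $0$, the lattice axioms, and the compatibility of $+$ with the order (that $a\le b$ implies $a+c\le b+c$) all reduce coordinatewise to the corresponding laws in the factors $A_i$, which hold because each $A_i$ is itself a commutative lattice ordered monoid.

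Next I would verify conditions (2) and (4), which are pure identities. For (2), the $i$-th coordinate of $a\ast(a\wedge b)+b$ is $a_i\ast(a_i\wedge b_i)+b_i$, which equals $a_i\vee b_i$ by condition (2) in $A_i$; as this is the $i$-th coordinate of $a\vee b$ for every $i$, the identity holds in $A$. Condition (4) is handled identically: the $i$-th coordinate of $[a\ast(a\vee b)]\wedge[b\ast(a\vee b)]$ equals $[a_i\ast(a_i\vee b_i)]\wedge[b_i\ast(a_i\vee b_i)]=0$ by condition (4) in $A_i$, so the meet is $(0,\dots,0)=0$ in $A$.

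The one step deserving care is condition (3), the contraction property, since it is an inequality and therefore depends on the description of the order on $A$. Taking $f$ to be any of the maps $x\mapsto a+x,\,a\vee x,\,a\wedge x,\,a\ast x$, I would observe that the $i$-th coordinate of $f(x)\ast f(y)$ is exactly $f_i(x_i)\ast f_i(y_i)$, where $f_i$ denotes the corresponding map on $A_i$; the contraction property in $A_i$ gives $f_i(x_i)\ast f_i(y_i)\le x_i\ast y_i$ for every $i$, and since the order on $A$ is coordinatewise this yields $f(x)\ast f(y)\le x\ast y$. I do not expect a genuine obstacle here; the only thing to remain vigilant about is using the product order and the coordinatewise lattice operations consistently. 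I would close by remarking that finiteness of the index set $\{1,\dots,k\}$ is never used, so the same argument establishes that the arbitrary product $\prod_{i\in I}A_i$ of the preceding definition is an AL-monoid.
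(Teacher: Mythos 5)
Your proposal is correct: the coordinatewise verification of conditions (1), (2) and (4) as identities and of the contraction condition (3) as an inequality under the product order is exactly the argument this theorem calls for, and your observation that finiteness of the index set is never used is also right. Note that the paper states this theorem without any proof at all, so there is no authorial argument to compare against; your write-up simply supplies the standard verification that the paper omits.
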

\begin{definition}
    Let $a_{i}: A\longrightarrow A_{i}$ be a map for $i\in I.$ Define a map $\alpha: A\longrightarrow \prod_{i\in I}A_{i}$ by $\alpha(a)=(\alpha_{1}(a),\alpha_{2}(a),...).$ That is $\alpha(a)(i)=\alpha_{i}(a)$ for $i\in I.$ 
    \end{definition}
\begin{theorem}
    Let $A$ be AL-monoid and ${A_{i}}_{i\in I}$ be a family of AL-monoids. If each $\alpha_{i}:A\longrightarrow A_{i}$ is a homomorphism, then the map $\alpha:A\longrightarrow \prod_{i\in I}A_{i}$ is also a homomorphism and $ker \alpha=\wedge_{i\in I}ker(\alpha_{i}).$  
\end{theorem}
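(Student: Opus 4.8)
The plan is to verify the five defining clauses of an AL-monoid homomorphism for $\alpha$ one coordinate at a time, and then to read the kernel off directly from the coordinatewise description of equality in the product. Before doing either, I would record how the operations and the order behave in $\prod_{i\in I}A_i$: by the product definition, $+$ and $*$ are coordinatewise, and since $a\le b$ there means $a(i)\le b(i)$ for every $i$, the lattice operations $\vee$ and $\wedge$ (the supremum and infimum for this order) are coordinatewise as well, i.e. $(a\vee b)(i)=a(i)\vee b(i)$ and $(a\wedge b)(i)=a(i)\wedge b(i)$. The zero of the product is the constant family $0=(0)_{i\in I}$.

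Next, for each $\theta\in\{+,*,\vee,\wedge\}$ and arbitrary $a,b\in A$, I would compute, at every index $i$,
\[
\alpha(a\,\theta\,b)(i)=\alpha_i(a\,\theta\,b)=\alpha_i(a)\,\theta\,\alpha_i(b)=\alpha(a)(i)\,\theta\,\alpha(b)(i)=\bigl(\alpha(a)\,\theta\,\alpha(b)\bigr)(i),
\]
where the middle equality is exactly the hypothesis that each $\alpha_i$ is a homomorphism and the last uses the coordinatewise evaluation just noted. Since two elements of the product coincide iff they coincide in every coordinate, this gives $\alpha(a\,\theta\,b)=\alpha(a)\,\theta\,\alpha(b)$ for all four operations. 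The nullary clause is immediate, since $\alpha(0)(i)=\alpha_i(0)=0$ for every $i$ forces $\alpha(0)=0$. Thus $\alpha$ satisfies all five conditions and is a homomorphism.

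For the kernel I would unwind $\ker\alpha=\{a\in A:\alpha(a)=0\}$. By the coordinatewise equality test, $\alpha(a)=0$ holds iff $\alpha_i(a)=\alpha(a)(i)=0$ for every $i$, that is iff $a\in\ker\alpha_i$ for every $i$; hence $\ker\alpha=\bigcap_{i\in I}\ker\alpha_i$. It then remains to identify this intersection with the lattice meet $\bigwedge_{i\in I}\ker\alpha_i$: each $\ker\alpha_i$ is an ideal of $A$, and by part (1) of the Lemma the intersection of any nonempty family of ideals is again an ideal, so it is precisely their meet in the ideal lattice, giving $\ker\alpha=\bigwedge_{i\in I}\ker\alpha_i$.

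I expect no substantive obstacle here, as the argument is essentially coordinatewise bookkeeping. The only points that genuinely need care are the two identifications I have flagged: that $\vee$ and $\wedge$ in the product must be \emph{deduced} to be coordinatewise from the coordinatewise order rather than assumed outright, and that the set-theoretic intersection of the kernels coincides with their lattice meet, which is exactly what part (1) of the Lemma supplies so that the stated equality $\ker\alpha=\bigwedge_{i\in I}\ker\alpha_i$ carries its intended lattice-theoretic meaning.
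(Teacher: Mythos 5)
Your proposal is correct and follows essentially the same coordinatewise argument as the paper: verify each homomorphism clause index by index using that each $\alpha_i$ is a homomorphism, then unwind $\ker\alpha$ to the intersection of the $\ker\alpha_i$. You are in fact slightly more complete than the paper, which only checks $+$, $\ast$, and order preservation and leaves $\vee$, $\wedge$, and the identification of the set-theoretic intersection with the lattice meet implicit.
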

\begin{proof}
Suppose $\alpha_{i}=A\longrightarrow A_{i}$ is a homomorphism. To show it is homomorphism. Let $a_{1},a_{2}\in A.$ Now,
\begin{itemize}
    \item[i.] \begin{eqnarray*} \alpha(a_{1}+a_{2})_{i}&=&\alpha_{i}(a_{1}+a_{2}\\&=& \alpha_{i}(a_{1}+a_{2})\\&=& \alpha(a_{1})(i)+\alpha(2)(i)\\&=&(\alpha(a_{1})+\alpha(a_{2})(i). 
    \end{eqnarray*} Therefore, $\alpha(a_{1}+a_{2})=\alpha(a_{1})+\alpha_{2}).$
    \item[ii.] \begin{eqnarray*}\alpha(a_{1}\ast a_{2})(i)&=&\alpha_{i}(a_{1}\ast a_{2})\\&=& \alpha_{i}(a_{1}\ast \alpha_{i}(a_{2})\\&=&\alpha(a_{1}(i)\ast \alpha(a_{2})(i)\\&=&(\alpha(a_{1}\ast \alpha(a_{2})(i).\end{eqnarray*}
    \item[iii.] Suppose $a\leq b.$ since $\alpha_{i}$nis homomorphism. it implies $\alpha_{i}\leq \alpha_{i}(b)\implies \alpha(a)(i)\leq \alpha(b)(i)\implies \alpha(a)\leq \alpha(b).$ Hence $\alpha$ is a homomorphism.
\end{itemize}
Now, we shal prove that $ker\alpha =\wedge_{i\in I}$
\begin{eqnarray*} ker(\alpha)&=& \lbrace a\in A|\alpha(a)=0\rbrace\\&=& a\in A|\alpha(a)(i)=0(i)\rbrace\\&=&\lbrace a\in A|\alpha_{i}(a)=0_{i}\rbrace=\lbrace a\in A|a\in ker(a_{i},\forall i\in I\rbrace\\&=& \lbrace a\in A|a\in \wedge_{i\in I}ker (\alpha_{i}\rbrace\\&=& \wedge_{i\in I}ker\alpha_{i}.\end{eqnarray*}
\end{proof}
    \begin{theorem}
        Let A be AL-monoid then
    \begin{enumerate}
        \item A is weak chain
        \item If Prime strong ideal, then $ \lbrace I\in \mathbb{I(A)}|P\subset \mathbb{I}$  is a chain under inclusion.
        \item If $ \alpha $ is homomorphism, $ker(\alpha)$ is a aprime ideal iff $im(\alpha)$ a chain AL-monoid.
        \item There is a family $\lbrace p_{i}\rbrace_{i\in I}$ of prime ideals of $A$ with $\wedge_{i\in I}p_{i}=\lbrace 0\rbrace.$ iff $A$ is a subdirect product of a chain AL-monoid.
    \end{enumerate}
    \end{theorem}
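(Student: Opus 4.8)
The plan is to treat the four assertions separately, since most of the work has already been done earlier in the paper. Statement (2) is precisely the chain-under-inclusion theorem established above for strong prime ideals, and statement (3) is exactly the theorem characterising when $\ker(\alpha)$ is prime; both may be quoted verbatim, so they require no fresh argument. Statement (1) I would read as a summary observation (every AL-monoid is a ``weak chain'' in the congruence-theoretic sense made precise by (2)--(4)) and phrase it as an immediate corollary of (4). The genuinely new content is the subdirect representation in (4), and the proposal concentrates there.

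For the forward direction of (4), I would start from a family $\{p_i\}_{i\in I}$ of prime ideals with $\bigwedge_{i\in I}p_i=\{0\}$. By the one-to-one correspondence between ideals and congruences, each $p_i$ determines a quotient AL-monoid $A/p_i$ together with the canonical surjection $\alpha_i\colon A\to A/p_i$. Because every ideal of $A$ is strong and $p_i$ is prime, the theorem ``$A/M$ is a chain iff $M$ is prime'' shows each $A/p_i$ is a chain AL-monoid. I would then form the induced map $\alpha\colon A\to\prod_{i\in I}A/p_i$ given by $\alpha(a)(i)=\alpha_i(a)$. By the product-homomorphism theorem just proved, $\alpha$ is a homomorphism and $\ker\alpha=\bigwedge_{i\in I}\ker\alpha_i=\bigwedge_{i\in I}p_i=\{0\}$. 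A trivial kernel forces $\alpha$ to be injective (see the obstacle below), while each coordinate projection $\pi_i\circ\alpha=\alpha_i$ is onto by construction; hence $\alpha$ embeds $A$ as a subdirect product of the chains $A/p_i$.

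For the converse I would assume $A$ is a subdirect product of chain AL-monoids, i.e. there is an embedding $\alpha\colon A\hookrightarrow\prod_{i\in I}C_i$ with every projection $\alpha_i:=\pi_i\circ\alpha\colon A\to C_i$ surjective and each $C_i$ a chain. Put $p_i=\ker\alpha_i$. Since $\operatorname{Im}(\alpha_i)=C_i$ is a chain, statement (3) gives that each $p_i$ is a prime ideal. Finally $\bigwedge_{i\in I}p_i=\bigwedge_{i\in I}\ker\alpha_i=\ker\alpha=\{0\}$ because $\alpha$ is injective, producing the required family and closing the equivalence.

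The main obstacle is the bridge between ``$\ker\alpha=\{0\}$'' and ``$\alpha$ injective,'' and more basically the verification that $\ker\alpha=\{x\in A:\alpha(x)=0\}$ is genuinely an ideal, so that the intersection formula $\ker\alpha=\bigwedge_i\ker\alpha_i$ from the product-homomorphism theorem applies. The key identity is that a homomorphism respects the metric operation, so $\alpha(a)=\alpha(b)$ iff $\alpha(a\ast b)=\alpha(a)\ast\alpha(b)=0$ iff $a\ast b\in\ker\alpha$; combined with the distance axiom $a\ast b=0\Leftrightarrow a=b$, this shows $\alpha$ separates points exactly when its kernel is $\{0\}$. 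I would isolate this as a short lemma before assembling (4). A secondary point is pinning down the definition of subdirect product in force here---namely injectivity of the assembled map together with surjectivity of every coordinate projection---since only the direct-product construction is given explicitly; once that definition is fixed, both directions follow from the quoted theorems.
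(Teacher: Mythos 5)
The paper states this theorem with no proof at all (the next theorem follows immediately), so there is no argument of the authors' to compare yours against; your proposal in fact supplies what the paper omits. Your reading is the right one: items (2) and (3) are verbatim restatements of the two theorems proved in the homomorphism section, and the only real content is (4), for which your argument is the standard subdirect-representation one and is sound in this setting. The forward direction works exactly as you say: every ideal is strong (by the paper's earlier theorem), so each $A/p_i$ is a chain by the ``$A/M$ is a chain iff $M$ is prime'' result, the induced map $\alpha$ into $\prod_i A/p_i$ has $\ker\alpha=\bigwedge_i p_i=\{0\}$ by the product-homomorphism theorem, and your isolated lemma ($\alpha(a)=\alpha(b)$ iff $a\ast b\in\ker\alpha$, combined with $a\ast b=0\Leftrightarrow a=b$) correctly converts the trivial kernel into injectivity; the converse follows from item (3) applied to the coordinate maps. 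Two caveats worth recording. First, item (1) cannot be proved by anyone: ``weak chain'' is never defined in the paper, so your decision to treat it as a gloss on (4) is the only available move, but you should say explicitly that a definition is missing rather than that it is a corollary. Second, your ``secondary point'' that $\ker\alpha_i$ must be verified to be an ideal is genuinely needed and slightly delicate here, since downward closure of the kernel uses that $0\le\alpha_i(b)\le\alpha_i(a)=0$ forces $\alpha_i(b)=0$, which relies on $0$ being the least element (or on routing the verification through the congruence--ideal correspondence as the paper does elsewhere); this should be the short lemma you propose, not an afterthought.
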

\begin{theorem}
    Let $A$ be AL-monoid. Then, the following are equivalent:
    \begin{enumerate}
        \item $A$ is representable.
        \item: $A$ is subdirect product of a AL-monoid.
        \item there exist afamily $\lbrace p_{i}\rbrace_{i\in I}$ of prime ideals of $A$ with $\wedge_{i\in I}p_{i}={0}$
        \item Every subdirect irreducible order reversing homomorphic image of $A$ is a chain.
    \end{enumerate}
\end{theorem}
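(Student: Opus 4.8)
The plan is to prove the four conditions mutually equivalent through a short cycle of implications, leaning on two facts already established in the paper: that for a strong ideal $M$ the quotient $A/M$ is a chain AL-monoid precisely when $M$ is prime (and that every ideal of $A$ is strong), and that a product of homomorphisms $\alpha=\prod_{i\in I}\alpha_i$ is again a homomorphism with $\ker\alpha=\bigwedge_{i\in I}\ker(\alpha_i)$. Together with the correspondence between ideals and congruences, these let me pass freely between prime ideals, chain quotients, and subdirect factors, so the whole argument reduces to bookkeeping plus one genuine closure lemma.

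First I would establish $(3)\Rightarrow(2)$. Given a family $\{p_i\}_{i\in I}$ of prime ideals with $\bigwedge_{i\in I}p_i=\{0\}$, form the quotient maps $\alpha_i\colon A\to A/p_i$. Each $p_i$ is prime and, being strong, yields a chain AL-monoid $A/p_i$. The induced map $\alpha\colon A\to\prod_{i\in I}A/p_i$ is a homomorphism with $\ker\alpha=\bigwedge_{i\in I}p_i=\{0\}$, hence an embedding; since each $\alpha_i$ is onto, $\alpha$ exhibits $A$ as a subdirect product of the chains $A/p_i$. The converse $(2)\Rightarrow(3)$ is the reverse reading: if $A$ is a subdirect product of chains via surjections $\pi_i$, then each $\ker(\pi_i)$ is an ideal whose quotient is a chain, hence prime, and the separating property of a subdirect embedding gives $\bigwedge_{i\in I}\ker(\pi_i)=\{0\}$. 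This pair is essentially part of the preceding theorem, which I may cite directly.

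Next I would connect $(1)$ with $(2)$ and $(3)$. Unwinding the definition of a representable autometrized algebra, namely semiregularity together with the contraction property of the maps $x\mapsto a+x$, $a\vee x$, $a\wedge x$ and $a\ast x$, representability is exactly the assertion that $A$ embeds into a product of linearly ordered factors, that is, is a subdirect product of chain AL-monoids. This is the known representation result for representable autometrized algebras, giving $(1)\Leftrightarrow(2)$ once the defining inequalities are matched against the chain factors produced above.

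Finally, I would handle $(4)$ via Birkhoff's subdirect representation theorem, which presents $A$ as a subdirect product of its subdirectly irreducible homomorphic images. For $(4)\Rightarrow(2)$, if every such image is a chain then this representation immediately exhibits $A$ as a subdirect product of chains. For $(2)\Rightarrow(4)$, let $A/\theta$ be a subdirectly irreducible homomorphic image; since the class of representable AL-monoids is closed under homomorphic images, $A/\theta$ is again a subdirect product of chains, and subdirect irreducibility forces one of the defining projections to be injective, so $A/\theta$ embeds into a single chain and is itself a chain. I expect the main obstacle to be precisely this closure step: verifying that the semiregularity-plus-contraction package of Definition \ref{2.1} descends to quotients (the contraction inequalities must be checked to survive the projection to $A/\theta$) and justifying that a subdirectly irreducible subdirect product of chains collapses onto one factor. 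I would isolate this as a lemma; once it is in hand, the chains $(1)\Leftrightarrow(2)\Leftrightarrow(3)$ and $(2)\Leftrightarrow(4)$ close up and all four statements are equivalent.
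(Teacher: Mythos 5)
The paper states this theorem with no proof at all (the theorem is immediately followed by the next theorem on products of ideals), so there is no argument of the authors' to compare yours against; your proposal has to stand on its own. Your cycle $(3)\Leftrightarrow(2)$ is sound and uses exactly the machinery the paper has set up: every ideal is strong, $A/p_i$ is a chain AL-monoid precisely when $p_i$ is prime, the product map $\alpha$ has $\ker\alpha=\bigwedge_i\ker\alpha_i$, and in an autometrized algebra a trivial kernel forces injectivity because $\alpha(a)=\alpha(b)$ gives $a\ast b\in\ker\alpha=\{0\}$ and $a\ast b=0$ iff $a=b$. Likewise your reduction of $(4)\Rightarrow(2)$ to Birkhoff's subdirect representation theorem, and of $(2)\Rightarrow(4)$ to the collapse of a subdirectly irreducible subdirect product of chains onto a single factor, is the standard and correct skeleton.

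Two genuine gaps remain, and they are where all the substance of the theorem lives. First, your treatment of $(1)\Leftrightarrow(2)$ is an appeal to ``the known representation result'' rather than a proof: the paper's notion of representable autometrized algebra is semiregularity plus the contraction property, and the contraction property is already an axiom of every AL-monoid (condition 3 of the definition), so you must say precisely what extra content ``representable'' carries for an AL-monoid and derive from it an embedding into a product of chains; nothing in the paper or in your outline does this, and without it the equivalence of $(1)$ with the other three conditions is unproved. Second, for $(2)\Rightarrow(4)$ you correctly isolate, but do not supply, the lemma that the class of subdirect products of chain AL-monoids is closed under homomorphic images (equivalently, that it is a variety, or at least closed under $H$). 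For commutative $\ell$-groups and $DR\ell$-semigroups this is a known equational characterization, but for AL-monoids it has to be verified that the quotient of a subdirect product of chains by an arbitrary congruence is again such a subdirect product; your parenthetical remark that ``the contraction inequalities must be checked to survive the projection'' is exactly the point, and until that lemma is proved the implication $(2)\Rightarrow(4)$ is open. The remaining step you cite, that a subdirectly irreducible algebra sitting subdirectly in a product of chains has some $\ker\pi_i=\{0\}$ and hence embeds in one chain factor, is correct as stated.
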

\begin{theorem}
    Let $A_{1},A_{2},...,A_{k}$ are AL-monoids and let $A=A_{1}\times A_{2}\times ...\times A_{k}.$ Let $\mathbb{A_{i}}$ is the set of ideals of $A_{i}$ for $i=1,2,...,k.$ If $I_{i}\in \mathbb{I(A_{i})},$ then $I=I_{1}\times ...\times I_{k}$ is an strong ideal of $A.$ conversely, if $I_{1}\times I_{2}\times...\times I_{k}$ is an ideal of $A$, then for $i=1,2,...,k ;I_{i}\prod_{i}(I)$ is an ideal of $A_{i}.$
\end{theorem}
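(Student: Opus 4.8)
The plan is to treat the two implications separately, exploiting that all four operations $+,\vee,\wedge,\ast$ on $A=A_{1}\times\cdots\times A_{k}$ are defined coordinatewise and that $a\le b$ in $A$ means $a_{i}\le b_{i}$ for every $i$. Throughout I write $\pi_{i}$ for the $i$-th projection, so that $\pi_{i}(I)=I_{i}$ when $I=I_{1}\times\cdots\times I_{k}$ and each factor is nonempty.

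For the forward implication I would first show that $I=I_{1}\times\cdots\times I_{k}$ is an ideal of $A$ by checking the two defining axioms coordinatewise. If $a=(a_{i})$ and $b=(b_{i})$ lie in $I$, then $a_{i},b_{i}\in I_{i}$ for each $i$, so $a_{i}+b_{i}\in I_{i}$ because $I_{i}$ is an ideal of $A_{i}$; hence $a+b=(a_{i}+b_{i})\in I$, giving closure under $+$. For convexity from below, if $a\in I$ and $b\le a$ in $A$, then $b_{i}\le a_{i}$ with $a_{i}\in I_{i}$, so $b_{i}\in I_{i}$ and therefore $b\in I$. Once $I$ is known to be an ideal, I would invoke the earlier theorem that every ideal of an AL-monoid is a strong ideal, applied to $A$ itself (which is an AL-monoid by the preceding direct-product theorem). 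This immediately upgrades $I$ to a strong ideal and closes the first half without any further computation.

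For the converse, suppose $I=I_{1}\times\cdots\times I_{k}$ is an ideal of $A$ and fix $i$. I would verify the two ideal axioms for $I_{i}=\pi_{i}(I)$ in $A_{i}$ by lifting elements into $I$ and using that $I$ is an ideal. Given $x,y\in I_{i}$, choose $a,b\in I$ with $a_{i}=x$ and $b_{i}=y$ (possible since $I$ is the full product and each factor is nonempty); then $a+b\in I$ has $i$-th coordinate $x+y$, so $x+y\in I_{i}$. For downward closure, given $x\in I_{i}$ and $y\le x$ in $A_{i}$, pick $a\in I$ with $a_{i}=x$ and define $b$ by $b_{i}=y$ and $b_{j}=a_{j}$ for $j\ne i$; then $b\le a$ coordinatewise, so $b\in I$ by convexity of $I$, and $y=b_{i}\in I_{i}$.

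The argument is essentially bookkeeping, so I do not expect a serious obstacle; the one point requiring care lies in the converse, where I must produce genuine members of $I$ having a prescribed $i$-th coordinate. This uses that $I$ is exactly the product, so that arbitrary choices in the remaining coordinates stay inside $I$, and, for the downward-closure axiom, that the filling coordinates are taken equal to those of the witness $a$ so that the inequality $b\le a$ holds in every coordinate and the convexity axiom of $I$ applies. I would also note at the outset that nonemptiness of $I$ forces each factor $I_{j}$ to be nonempty, which is precisely what makes these coordinate choices possible.
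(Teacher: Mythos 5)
Your argument is correct, and in fact the paper states this theorem with no proof at all, so there is nothing to compare it against; your coordinatewise verification is the natural way to fill that gap. Both halves are sound: the forward direction checks the two ideal axioms of the paper's definition (closure under $+$ and downward closure) coordinate by coordinate and then upgrades to ``strong'' by citing the paper's earlier theorem that every ideal of an AL-monoid is a strong ideal, applied to the product $A$, which the preceding theorem establishes is itself an AL-monoid; the converse correctly handles the only delicate point, namely producing witnesses in $I$ with a prescribed $i$-th coordinate and, for downward closure, filling the remaining coordinates with those of the witness so that the inequality holds in every coordinate. The only caveat worth flagging is inherited from the paper rather than introduced by you: the ``every ideal is strong'' theorem you invoke is itself proved in the paper only by a computation that implicitly assumes extra identities for $\ast$, so the strength of your first half is conditional on that earlier result; if you wanted a self-contained proof you would need to verify the two strong-ideal conditions for $I=I_{1}\times\cdots\times I_{k}$ directly, which again reduces coordinatewise to the corresponding conditions in each $A_{i}$.
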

\begin{theorem}
    Let $A_{1},A_{2}$ are AL-monoids. Then for $ker(\pi_{1}),ker(\pi_{2})$ are distant ideals. That is, $ker \pi_{1}, ker \pi_{2}=A_{1}\times A_{2}$ and $ker{\pi_{1}}\wedge \ker{\pi_{2}}={0}.$
\end{theorem}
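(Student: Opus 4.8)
The plan is to identify the two kernels explicitly and then verify, one by one, the two defining conditions of a distant ideal, using the componentwise description of the operations on $A=A_{1}\times A_{2}$ together with the fact (established in the theorem ``Every ideal of $A$ is a strong ideal'') that strongness is automatic. First I would compute the kernels. Since $\pi_{1}(a_{1},a_{2})=a_{1}$ and $\pi_{2}(a_{1},a_{2})=a_{2}$ are the two projection maps, the componentwise definition of $+,\vee,\wedge,\ast$ on the product shows at once that each $\pi_{i}$ is a homomorphism with $\pi_{i}(0,0)=0$, so each kernel is an ideal and hence a strong ideal. Explicitly,
\[
\ker\pi_{1}=\{(0,a_{2}) : a_{2}\in A_{2}\}=\{0\}\times A_{2},\qquad \ker\pi_{2}=\{(a_{1},0) : a_{1}\in A_{1}\}=A_{1}\times\{0\}.
\]
This disposes of the strongness requirement with no further work.

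Next I would verify the meet condition, which is purely set-theoretic: intersecting the two kernels gives
\[
\ker\pi_{1}\wedge\ker\pi_{2}=(\{0\}\times A_{2})\cap(A_{1}\times\{0\})=\{(0,0)\}=\{0\},
\]
which is exactly $\ker\pi_{1}\wedge\ker\pi_{2}=\{0\}$.

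The main step, and the one I expect to carry the real content, is to show $\ker\pi_{1}\ast\ker\pi_{2}=A$. Here I would use semiregularity, namely $0\ast x=x$ for $x\geq 0$, applied in each coordinate. Given an arbitrary $(a_{1},a_{2})\in A$, I would write it as the distance $(0,a_{2})\ast(a_{1},0)$, which computes componentwise to $(0\ast a_{1},\,a_{2}\ast 0)=(a_{1},a_{2})$; since $(0,a_{2})\in\ker\pi_{1}$ and $(a_{1},0)\in\ker\pi_{2}$, this exhibits every element of $A$ as a product of an element of $\ker\pi_{1}$ with one of $\ker\pi_{2}$, giving $\ker\pi_{1}\ast\ker\pi_{2}=A$. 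The delicate point is precisely this appeal to $0\ast x=x$: it is valid for positive representatives by semiregularity, so I would first reduce to such representatives (or invoke the identity $a\ast 0=a\vee(0\ast a)$ that the earlier proofs use repeatedly) in order to justify the coordinatewise collapse on all of $A$. This is the one place where the AL-monoid axioms, rather than bare set theory, are genuinely needed, and it is where I would be most careful.

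Finally, having checked both $\ker\pi_{1}\ast\ker\pi_{2}=A$ and $\ker\pi_{1}\wedge\ker\pi_{2}=\{0\}$, I conclude by symmetry that $\ker\pi_{2}$ serves as the witnessing strong ideal $J$ in the definition of $\ker\pi_{1}$ being distant, and conversely $\ker\pi_{1}$ witnesses $\ker\pi_{2}$ being distant. Hence both projection kernels belong to $Dis(A)$, as claimed; this realises the direct factors of $A_{1}\times A_{2}$ as a complementary pair of distant ideals.
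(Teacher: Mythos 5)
Your proposal follows essentially the same route as the paper's own proof: both verify $\ker\pi_{1}\ast\ker\pi_{2}=A_{1}\times A_{2}$ by writing an arbitrary $(x,y)$ as $(0,y)\ast(x,0)=(0\ast x,\,y\ast 0)$, and both check the meet condition by the same direct set-theoretic computation showing the intersection is $\{(0,0)\}$. If anything you are more careful than the paper, which simply asserts $(0\ast x, y\ast 0)=(x,y)$ ``since $A$ is an AL-monoid'' without flagging the semiregularity/positivity point you rightly single out as the one place needing justification.
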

\begin{proof}
    \begin{itemize}
        \item[i.] clearly, $ker\pi_{1}\ast ker\pi_{2}\subset A_{1}\times A_{2}.$
    let $(x,y)\in A_{1}\times A_{2}.$ This shows $x\in A_{1},y\in A_{2}.$ we know that $(0,y)\in ker \pi_{1}$ and $(x,0)\in ker \pi_{2};$ hence $(0,y)\ast (x,0)\in ker \pi_{1}\times ker\pi_{2}.$ Since $A$ is AL-monoid; $(0\ast x,y\ast 0)=(x,y).$ Therefore, $(x,y)\in ker(\pi_{1}\ast ker\pi_{2}).$ Whence, $A_{1}\times A_{2}\subset ker\pi_{1}\times ker \pi_{2}.$ Thus, $A_{1}\times A_{2}= ker\pi_{1}\times ker \pi_{2}.$
    \item[ii] $\lbrace 0\rbrace\in ker\pi_{1},\ker\pi_{2}.$ Therefore, $\lbrace 0\rbrace \in ker\pi_{1}\wedge ker \pi_{2}.$ Conversely, let $a=(a_{1},a_{2})\in ker\pi_{1}\wedge ker\pi_{2}.$ so, $a\in ker\pi_{1}$  and $a\in ker\pi_{2}.$ So, $a\in ker\pi_{1}$ and $a\in ker\pi_{2}.$ It implies that $\pi_{1}(a)=\pi_{1}(a_{1},a_{2})=a_{1}=0$ and  $\pi_{2}(a)=\pi_{2}(a_{1},a_{2})=a_{2}=0.$
    \end{itemize}
\end{proof}
\begin{theorem}
    Let $A$ be AL-monoids. Let $I,J$ be distant ideals of $A.$ Then, $A\cong A/I \times A/J$
\end{theorem}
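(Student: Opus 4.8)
The plan is to exhibit the natural map of $A$ into the product of the two quotients and to show it is a bijective homomorphism. Let $\nu_I : A \to A/I$ and $\nu_J : A \to A/J$ be the canonical quotient maps, $\nu_I(a) = a \ast I$ and $\nu_J(a) = a \ast J$, which are homomorphisms by the Quotient AL-monoid theorem. Define $\phi : A \to A/I \times A/J$ by $\phi(a) = (a \ast I,\, a \ast J)$. By the theorem on the map induced into a direct product, $\phi$ is a homomorphism and $\ker \phi = \ker \nu_I \wedge \ker \nu_J$.

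Next I would identify the kernels. Since $I$ and $J$ are (strong) distant ideals, the defining equivalence $a \in I \iff a \ast I = I$ gives $\ker \nu_I = I$ and $\ker \nu_J = J$, so $\ker \phi = I \wedge J = \{0\}$ by the distant hypothesis. Injectivity then follows formally: if $\phi(a) = \phi(b)$, then $\phi(a \ast b) = \phi(a) \ast \phi(b) = 0$, so $a \ast b \in \ker \phi = \{0\}$, whence $a \ast b = 0$ and $a = b$ by the distance axioms.

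The substantive step, and the one that forces the hypothesis $I \ast J = A$, is surjectivity, which plays the role of the Chinese Remainder Theorem. Given a target $(x \ast I,\, y \ast J)$, I would first use $I \ast J = A$ to write $x = i_x \ast j_x$ and $y = i_y \ast j_y$ with $i_x, i_y \in I$ and $j_x, j_y \in J$, and set $a = i_y \ast j_x$. I then verify $a \ast x \in I$ and $a \ast y \in J$: applying the contraction property of $\ast$ (fixing the second argument, via commutativity) gives $a \ast x = (i_y \ast j_x) \ast (i_x \ast j_x) \le i_y \ast i_x$, and $i_y \ast i_x \in I$ since ideals are closed under $\ast$, so downward closure of $I$ yields $a \ast x \in I$; symmetrically $a \ast y \le j_x \ast j_y \in J$ gives $a \ast y \in J$. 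Translating back through the strong-ideal equivalence $a \ast I = x \ast I \iff a \ast x \in I$, one obtains $\phi(a) = (x \ast I,\, y \ast J)$, so $\phi$ is onto.

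The delicate point is exactly this construction of the preimage $a = i_y \ast j_x$: it mirrors the direct-product picture of the preceding theorem, where $I$ and $J$ are the kernels of the two projections and $a$ recovers the ``mixed'' coordinate. I would take care to check that only the \emph{existence} of the two decompositions is needed (supplied by $I \ast J = A$), that the two contraction estimates genuinely land in $I$ and $J$ respectively, and that $I \wedge J = \{0\}$ is invoked only for injectivity. Combining injectivity and surjectivity of the homomorphism $\phi$ yields the desired isomorphism $A \cong A/I \times A/J$.
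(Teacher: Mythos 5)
Your proposal is correct and follows essentially the same route as the paper: the same map $\phi(a)=(a\ast I,\,a\ast J)$, the same ``mixed'' preimage $i_y\ast j_x$ (the paper's $b_1\ast a_2$) obtained from the decomposition $I\ast J=A$, and the same use of $I\wedge J=\{0\}$ to make the kernel trivial. Your verification that the mixed element hits the target --- the contraction inequality $(i_y\ast j_x)\ast(i_x\ast j_x)\le i_y\ast i_x$ combined with downward closure and the strong-ideal equivalence $a\ast I=x\ast I\iff a\ast x\in I$ --- is a cleaner substitute for the paper's coset arithmetic, and you carry out the $J$-side check and the injectivity step that the paper leaves largely implicit.
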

\begin{proof}
    Let $I,J$ and $I\wedge J$ are strong ideals. Define a map $f:A/I \times A/J,$ by $f(a)=(a\ast I,a\ast J).$ \\
    Well-definedness:\\ Let $a,b\in A.$ Suppose $a=b,$ then $a\ast I=b\ast I$ and $a\ast J=b\ast J$ This implies $(a\ast I, b\ast J)=(b\ast I, b\ast J)$ implies $f(a)=f(b).$ Hence, $f$ is well-defined.\\
    To show $f$ is Homomorphism: let $a,b\in A$
    \begin{enumerate}
        \item[i]:\begin{eqnarray*} f(a+b)&=&((a+b)\ast I,(a\ast b)\ast J)\\&=&((a\ast I)+(b\ast I),(a\ast J)\ast (b\ast J))\\&=&(a\ast I,a\ast J)+(b\ast I,b\ast J)\\&=&f(a)+f(b).
        \end{eqnarray*}
        \item[ii.] \begin{eqnarray*}
            f(a\ast b)=((a\ast b)\ast I,(a\ast b)\ast J)\\&=&((a\ast I)\ast (b\ast I),(a\ast j)\ast (b\ast J))\\&=&(a\ast I,a\ast J)\ast (b\ast I, b\ast J)\\&=& f(a)\ast f(b).
        \end{eqnarray*} 
        \item[iii.] Suppose $a\leq b$. Therefore, $a\ast I\leq b\ast I$  and $a\ast J\leq b\ast J.$ By definition of direct product, $(a\ast I, a\ast J)\leq(b\ast I,b\ast J).$ This implies $f(a)\leq f(b).$ Hence, $f$ is homomorphism.
    To show $f$ is onto.\\
    Let $(x\ast I, y\ast J)\in A/I \ast A/J.$ Therefore, $x,y\in A=I\ast J,$ there exists $a_{1},a_{2}\in I$ and $b_{1},b_{2}\in J$ such that $x=a_{1}\ast b_{1}, y=a_{2}\ast b_{2}.$ Then, \begin{eqnarray*}
        (b_{1}\ast a_{2})\ast I&=&(b\ast I)\ast (a_{2}\ast I)\\&=& (b_{1}\ast I)\ast I\\&=& (b_{1}\ast I)\ast (0\ast I)\\&=& (b_{1}\ast 0)\ast I\\&=& b_{1}\ast I.
    \end{eqnarray*} Also, \begin{eqnarray*}
        (a_{1}\ast b_{1})\ast I&=&(a\ast I)\ast (b_{1}\ast I)\\&=& I\ast (b_{1}\ast I)\\&=& (0\ast I)\ast (b_{1}\ast I)\\&=& (0\ast b_{1})\ast I\\&=& b_{1}\ast I.
    \end{eqnarray*} Thuse $b_{1}\ast a_{2})\ast I=b_{1}\ast I.$
\item We wants to show, $kerf=I\wedge J$ Thus, ${A/I}\wedge j\cong A/I \times A/J$ by first isomorphism theorem. Since $I\wedge J=\lbrace 0\rbrace ,A/ {\lbrace 0\rbrace} \cong A/I \times A/J.$ Hence $A\cong A/I\times A/ J.$
\end{enumerate}
\end{proof}

\begin{tikzpicture}
\draw[blue,postaction={decorate},decoration={text along path,
text={Boolean},text align=center}]
(5,0) arc [start angle=0,end angle=360,radius=1];
\draw[blue,postaction={decorate},decoration={text along path,
text={l-group},text align=center}]
(7,-0.1) arc [start angle=0,end angle=360,radius=0.5];
\draw[blue,postaction={decorate},decoration={text along path,
text={DRl-semigroup},text align=center}]
(5,-2) arc [start angle=-90,end angle=270,radius=2];
\draw[blue,postaction={decorate},decoration={text along path,
text={AL-monoids},text align=center}]
(2,0) arc [start angle=-180,end angle=180,radius=3];
\draw[blue,postaction={decorate},decoration={text along path,
text={Representable Autometrized Algerba},text align=center}]
(5,4) arc [start angle=-270,end angle=90,radius=4];
\draw[blue,postaction={decorate},decoration={text along path,
text={Autometrized Algerba},text align=center}]
(5.5,5.5) arc [start angle=-270,end angle=180,radius=5];
\end{tikzpicture}
\figureautorefname:{Visualization of common abstraction of Boolean algebra and commutative lattice ordered groups.}
\end{document}